\newtheorem{teo}{Theorem}
\newtheorem{rk}{Remark}
\newtheorem{clly}{Corollary}
\newtheorem{lemma}{Lemma}
\newcommand{\R}{{\mathbb{R}}}
\newcommand{\Z}{{\mathbb{Z}}}
\newcommand{\N}{{\mathbb{N}}}
\newcommand{\fix}{Fix}
\newcommand{\ta}{TA}
\newcommand{\inte}{Int}
\newcommand{\homeo}{Homeo}
\begin{document}
\title{Topologically Anosov plane homeomorphisms.}
\author{Gonzalo Cousillas, Jorge Groisman and Juliana Xavier}

\address{ Instituto de Matemática y Estadística ``Rafael Laguardia'', Facultad de Ingenier\'{\i}a,  Universidad de la Rep\'ublica, Montevideo, Uruguay.}
\email{gcousillas@fing.edu.uy}
\email{jorgeg@fing.edu..uy}
\email{jxavier@fing.edu.uy}

\keywords{topologically expansive homeomorphism, topological shadowing property, Topologically Anosov plane homeomorphism, homothety.}
\subjclass[2000]{Primary: 37E30; Secondary: 37B20}

 \begin{abstract}  This paper deals with classifying the dynamics of {\it Topologically Anosov} plane homeomorphisms.  We prove that a Topologically Anosov homeomorphism $f:\R^2 \to \R^2$
 is conjugate to a homothety if it is  the time one map of a flow. We also obtain results for the cases when the nonwandering set of $f$  reduces to a fixed point, or if  there exists an open, connected, simply connected  proper subset $U$ such that $U \subset \inte(\overline {f(U)})$, and such that $ \cup_{n\geq 0} f^n (U)= \R ^2$.  In the general case, we prove a structure theorem for the $\alpha$-limits of orbits with empty $\omega$-limit (or the $\omega$-limits of orbits with 
empty $\alpha$-limit), and we show that any basin of attraction (or repulsion) must be unbounded.

 \end{abstract}
 \maketitle

 \section{Introduction}
 A homeomorphism $f:M\to M$ of the 
 metric space to itself is called {\it expansive} if there exists $\alpha >0$ such that  given $x,y \in M, x\neq y$, then
 $d(f^n (x), f^n (y) )> \alpha$ for some $n\in \Z$.  The number $\alpha$ is called the {\it expansivity constant} of $f$. 
 
 The study of expansive systems is both classic and fascinating.  In Lewowicz's words \cite{lew}, the fact that every point has a distinctive dynamical meaning implies that a rich interaction
 between dynamics and topology is to be expected. 
 
If $\delta >0$, a $\delta$-pseudo-orbit for $f$ is a sequence $(x_n)_{n\in \Z}$ such that $d(f(x_n), x_{n+1})< \delta$ for all $n\in \Z$. If $\epsilon >0$, we say that the orbit of $x$ $\epsilon$-shadows a given pseudo-orbit if 
$d(x_n, f^n (x))<\epsilon$ for all $n\in \Z$. Finally,  we  say  that $f$
has  the  shadowing  property  if for each $\epsilon >0$ there exists $\delta >0$ such that every $\delta$-pseudo-orbit
is $\epsilon$-shadowed  by  an  orbit  of $f$. In other words, systems with the
shadowing property are precisely the ones in which ``observational errors'' do not
introduce unexpected behavior, in the sense that simulated orbits actually  ``follow''
real orbits.

 Anosov diffeomorphisms, the best known chaotic dynamical systems, are expansive and have the shadowing property.  Moreover, expansive homeomorphisms with 
 the shadowing property on compact metric spaces are known to have spectral decomposition in Smale's sense (\cite{aoki}).
 
 On non-compact spaces however, it is well known that a dynamical system may be expansive or have the shadowing property with respect to one metric, but not with respect to another metric that 
 induces the same topology. In \cite{dlrw} topological definitions of expansiveness and shadowing are given that are equivalent to the usual metric definitions for homeomorphisms on compact
 metric spaces, but are independent of any change of compatible metric.  In \cite{cou}, the author applies these definitions with the plane $\R ^2$ as the phase space and proves a fixed
 point theorem. Following his spirit, we take these definitions and try to classify the dynamics with the plane $\R ^2$ as the phase space.

Let $f: \R^2 \to \R ^2$ be a continuous map and $\delta: \R^2\to \R $ a continuous and strictly
  positive function.  A $\delta$-{\it pseudo-orbit} for $f$ is a sequence $(x_n)_{n\in \N}\subset \R^2$ such that $||f(x_n) - x_{n+1}||< \delta (f(x_n))$.
  A $\delta$-pseudo-orbit $(x_n)_{n\in \N}$ is {\it $\epsilon$-shadowed} by an orbit, if there exists $x\in \R^2$ such that $||x_n - f^n (x)||< \epsilon (x_n)$ for all $n\in \Z$.

 Throughout this paper $f:\R^2\to \R^2$ is a {\it Topologically Anosov} ($\ta$) homeomorphism.  That is:
 \begin{itemize}
  \item it is {topologically expansive}: there exists a continuous and strictly positive function $\epsilon: \R^2\to \R$ such that for all $x, y \in \R^2, x\neq y$ there exists $k\in \Z$ 
  satisfying $||f^k (x)-f^k (y)|| > \epsilon (f^k (x))$;
  \item it satisfies the {\it topological shadowing property}: for all continuous and strictly positive function $\epsilon: \R^2\to \R$ there exists $\delta: \R^2\to \R $ a continuous and strictly
  positive function such that every $\delta$-pseudo-orbit is $\epsilon$-shadowed by an orbit. 
 \end{itemize}
 
 As an example, a rigid translation is topologically expansive but does not satisfy the topological shadowing property. An example of $\ta$ homeomorphism is any homothety
 (see \cite{cou} for a proof), following the same ideas it can be seen that a {\it reverse homothety} (by a reverse homothety we mean  the map $z\mapsto 2\bar{z},\;z\in\mathbb{C}$)  is also a $\ta$ homeomorphism.  As being $\ta$ is a conjugacy invariant, the whole conjugacy class of homotheties and reverse homotheties belongs to the family of $\ta$ homeomorphisms.  In this work we deal
 with the problem of classifying $\ta$ homeomorphisms.  In particular, are all $\ta$ homeomorphisms conjugate to a homothety or  a reverse homothety?  We prove that this is the case if the homeomorphism is the time
 one map of a flow defined by a $C^1$ vector field (Theorem \ref{teo1}). If there is a global attracting fixed point $x_0$ (that is, $f^n (x)\to x_0$ for all $x\in \R^2$), we prove 
  that $f$ must be  also conjugate to a homothety or a reverse homothety.  What about an expansive attractor?  Is the Plykin attractor $\ta$?  We prove it is not, at least if its basin of attraction is the whole plane.  More generally, 
 we prove that if there exists an open, connected,  simply connected  proper subset $U$ such that $\overline {f(U)} \subset \inte (U)$, and such that $ \cup_{n\leq 0} f^n (U)= \R ^2$, then  $K = \cap_{n\geq 0} f^n (U) $ must be a single point.  Finally, we prove that if $f\in \homeo(\R ^ 2)$ is $\ta$, and $\Omega(f) = \{x_0\}$, $x_0 \in \fix (f)$, then $f$ is conjugate to a homothety if $f$ is  orientation preserving , and conjugate to a reverse homothety if $f$ is orientation reversing.

 \section{The one-dimensional case}
 
 In this brief section we characterize Topologically Anosov homeomorphisms on $\R$. 

\begin{teo}\label{DimOneCase}
Let $f$ be a Topologically Anosov homeomorphism on $\R$. Then, $f$ is topologically conjugate to $g$ where $g(x)=\pm \frac{x-x_{0}}{2} +x_{0}$, depending on whether $f$ preserves or reverses 
orientation.
\end{teo}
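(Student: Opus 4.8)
The plan is to pin down the fixed-point structure of $f$ using the two hypotheses separately, and then to build the conjugacy by a fundamental-domain argument. The key structural input is the classical fact that a compact interval admits no expansive homeomorphism, which I will transfer to the present (metric-independent) setting as follows: if $[a,b]$ is a compact $f$-invariant interval, then the expansivity function $\epsilon$ attains a positive minimum $c$ on $[a,b]$, so $f|_{[a,b]}$ would be an ordinary expansive homeomorphism of $[a,b]$ with constant $c$ — impossible.

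First I would show $f$ has exactly one fixed point and no $2$-periodic points. For existence: since topological expansiveness and the topological shadowing property are conjugacy invariants, and since a fixed-point-free orientation-preserving homeomorphism of $\R$ is conjugate to a translation — which the introduction already notes fails the shadowing property — the shadowing hypothesis forbids the fixed-point-free case, so $\fix(f)\neq\emptyset$ (in the orientation-reversing case a fixed point exists automatically, a decreasing homeomorphism meeting the diagonal exactly once). For uniqueness: any two distinct points that are both fixed by $f$, or interchanged by $f$, bound an $f$-invariant compact interval, and the input above gives a contradiction. Hence $\fix(f)=\{x_0\}$ and $f$ has no $2$-cycle, so $\fix(f^2)=\{x_0\}$ as well.

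Next I would determine the global dynamics at $x_0$. With a single fixed point, $f-\mathrm{id}$ (for the orientation-preserving map $f$, or for $f^2$) has constant sign on each of $(-\infty,x_0)$ and $(x_0,\infty)$, giving four configurations. The ``semistable'' ones, in which $x_0$ attracts from one side and repels to the other, are excluded: in the orientation-preserving case a pseudo-orbit that drifts up to $x_0$ along the attracting side and is then nudged across $x_0$ escapes along the repelling side, yet no genuine orbit can cross $x_0$, so such a pseudo-orbit cannot be shadowed; in the orientation-reversing case $f$ interchanges the two sides and conjugates the one-sided dynamics of $f^2$ to each other, so they must agree. The remaining configurations make $x_0$ a global attractor or a global repeller. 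If $x_0$ is a global attractor, $f$ is a global contraction toward $x_0$ (preserving each side, or swapping them); the repelling case is the inverse and is conjugate to $g^{-1}$, treated identically.

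Finally I would construct the conjugacy. In the orientation-preserving contraction case, fix $p>x_0$; then $x_0<f(p)<p$, and the intervals $I_n=[f^{n+1}(p),f^n(p)]$ tile $(x_0,\infty)$, with analogous tiles $J_n$ for the model $g(x)=\frac{x-x_0}{2}+x_0$. Choosing any increasing homeomorphism $I_0\to J_0$ matching endpoints and extending by $h|_{I_n}=g^{\,n}\circ h\circ f^{-n}$ yields a homeomorphism of $(x_0,\infty)$ conjugating $f$ to $g$; doing the same on $(-\infty,x_0)$ and setting $h(x_0)=x_0$ gives the conjugacy, continuity at $x_0$ being automatic since the tiles shrink to $x_0$. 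The orientation-reversing case is the same construction performed for $f^2$ on $(x_0,\infty)$ and propagated across $x_0$ by $f$, producing a conjugacy of $f$ with $g(x)=-\frac{x-x_0}{2}+x_0$. I expect the main obstacle to lie in the structural steps — converting topological expansiveness into ordinary expansiveness on the invariant interval to rule out extra (periodic) fixed points, and turning the informal ``pseudo-orbit crossing $x_0$'' idea into a clean contradiction with shadowing to rule out the semistable cases — whereas the concluding fundamental-domain conjugacy is routine.
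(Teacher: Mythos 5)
Your proposal is correct and follows essentially the same route as the paper's own proof: a unique fixed point obtained from the translation-has-no-shadowing argument plus the Bryant-type fact that a compact invariant interval cannot carry an expansive homeomorphism, exclusion of the semistable configurations by a pseudo-orbit that jumps across $x_0$ and therefore cannot be shadowed by any genuine (non-crossing) orbit, and a fundamental-domain construction of the conjugacy, with the orientation-reversing case handled through $f^2$. The one genuine local difference is in that last case: the paper asserts without proof that $f^2$ is itself Topologically Anosov so as to apply the orientation-preserving result to it, whereas you rule out $2$-cycles directly by the invariant-interval argument and use the fact that $f$ interchanges the two half-lines (so the $f^2$-dynamics on them are conjugate) to exclude semistability of $f^2$ --- a small but real improvement, since it avoids having to prove that the TA property passes to powers.
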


Let us prove some useful lemmas.

\begin{lemma}  
Let $f:\R \to \R$ be a Topologically Anosov homeomorphism. Then, there exists a unique fixed point for $f$.
\end{lemma}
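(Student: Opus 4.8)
The plan is to treat the orientation-preserving and orientation-reversing cases separately, since the latter is elementary. If $f$ reverses orientation it is strictly decreasing, so $x\mapsto f(x)-x$ is continuous and strictly decreasing with $f(x)-x\to+\infty$ as $x\to-\infty$ and $f(x)-x\to-\infty$ as $x\to+\infty$; hence it has exactly one zero, giving a unique fixed point with no appeal to the dynamical hypotheses. From now on I assume $f$ is orientation preserving, i.e.\ strictly increasing, and here I must use that $f$ is $\ta$.

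For existence, suppose $\fix(f)=\emptyset$. Then $f(x)-x$ never vanishes and, being continuous, has constant sign, so $f$ is a fixed-point-free increasing homeomorphism of $\R$. Such a map is topologically conjugate to a rigid translation: one builds the conjugacy in the standard way from a fundamental domain $[x_0,f(x_0)]$, choosing $h$ to be an increasing homeomorphism of this domain onto $[0,1]$ and extending by $h\circ f = h+1$. But a rigid translation is not $\ta$ — it is topologically expansive yet fails the topological shadowing property, as recalled in the Introduction — while being $\ta$ is a conjugacy invariant. This contradicts the assumption that $f$ is $\ta$, so $f$ has at least one fixed point.

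For uniqueness, suppose $f$ had two fixed points and let $\epsilon$ be the expansivity function. If $\fix(f)$ is not discrete it contains two points $p\neq p'$ inside an arbitrarily small interval; taking $|p-p'|$ smaller than $\min\{\epsilon(t): t\in[p,p']\}>0$, the constant orbits of $p$ and $p'$ are never $\epsilon$-separated, contradicting topological expansiveness. Otherwise $\fix(f)$ is discrete and we may pick adjacent fixed points $p<q$ with $f(x)\neq x$ on $(p,q)$, so $f(x)-x$ has constant sign there; say $f(x)>x$ (the opposite sign is handled identically, the conjugacy below being to $s\mapsto s-1$). Then $(p,q)$ is $f$-invariant and $f|_{(p,q)}$ is again a fixed-point-free increasing homeomorphism of an interval, hence conjugate via an order-preserving homeomorphism $h:(p,q)\to\R$ to the translation $s\mapsto s+1$.

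The key point — and the step I expect to be the main obstacle — is to produce two distinct points whose orbits remain $\epsilon$-close for all of $\Z$. I would exploit that $h^{-1}:\R\to(p,q)$ is monotone with finite limits at $\pm\infty$, and is therefore uniformly continuous. Writing $m=\min\{\epsilon(t): t\in[p,q]\}>0$ and fixing $x_0\in(p,q)$, for any $y\in(p,q)$ one has $f^n(y)-f^n(x_0)=h^{-1}(h(x_0)+n+\ell)-h^{-1}(h(x_0)+n)$ with $\ell=h(y)-h(x_0)$, which is bounded by the modulus of continuity of $h^{-1}$ evaluated at $\ell$, uniformly in $n$. Choosing $y$ close enough to $x_0$ that this bound is below $m$, the pair $x_0\neq y$ satisfies $|f^n(x_0)-f^n(y)|<m\le\epsilon(f^n(x_0))$ for every $n\in\Z$, since all iterates stay in $[p,q]$; this contradicts topological expansiveness. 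Hence $f$ has exactly one fixed point. I expect the uniform-continuity observation to be the crux: it is what converts the pointwise asymptotics ``orbits of nearby points in a drift interval stay close'' into a single bound uniform in $n$, which the bare continuity of each individual $f^n$ does not provide.
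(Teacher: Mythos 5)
Your proof is correct, and while the existence half coincides with the paper's (fixed--point--free increasing homeomorphisms are conjugate to translations, which fail topological shadowing, and $\ta$ is a conjugacy invariant; the orientation-reversing case is trivial), your uniqueness argument takes a genuinely different route. The paper disposes of two fixed points $x_1<x_2$ in one line: the restriction of $f$ to the compact invariant interval $[x_1,x_2]$ is metrically expansive with constant $\min\{\epsilon(t):t\in[x_1,x_2]\}>0$, contradicting Bryant's theorem that no compact arc supports an expansive homeomorphism. You instead reprove, in effect, the special case of Bryant's theorem that is needed: after splitting off the non-discrete case (where two nearby fixed points give constant orbits that are never $\epsilon$-separated), you take adjacent fixed points $p<q$, conjugate $f|_{(p,q)}$ to the translation $s\mapsto s+1$, and use the key observation that $h^{-1}:\R\to(p,q)$, being monotone and bounded with finite limits at $\pm\infty$, is \emph{uniformly} continuous --- so two points at small $h$-distance $\ell$ have orbits that stay within $\omega(|\ell|)$ of each other for all $n\in\Z$, uniformly, which beats the compactness bound $m=\min_{[p,q]}\epsilon$ and contradicts expansiveness. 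The paper's proof buys brevity at the cost of an external reference; yours is self-contained, exhibits explicitly the inseparable pair of orbits, and isolates the correct mechanism (uniform continuity of the conjugacy on the drift interval) that makes a pointwise closeness statement uniform in $n$. One cosmetic remark: in your non-discrete case the choice of $p,p'$ with $|p-p'|<\min\{\epsilon(t):t\in[p,p']\}$ reads slightly circular as written (the bound depends on the points chosen); it is cleanly fixed by first taking an accumulation point $p^*$ of $\fix(f)$, using continuity of $\epsilon$ to get a uniform lower bound on a neighborhood of $p^*$, and then choosing $p'$ in that neighborhood --- or, simpler still, by noting that any two fixed points in $[x_1,x_2]$ at distance less than $\min_{[x_1,x_2]}\epsilon$ already yield the contradiction, with no discreteness discussion at all.
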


\begin{proof}
If $f$ reverses orientation, it is clear that $\fix(f)=\{p\}$, for some $p\in \R$. If $f$ is orientation preserving, and fixed point free, then $f$ is topologically conjugate to a translation,
which does not admit the shadowing property.

Regarding uniqueness, suppose that $x_{1}<x_{2}$ are fixed points and let $g$ be the restriction of $f$ to $[x_{1},x_{2}]$. Then $g$ is
a metric expansive homeomorphism in a compact interval, contradicting Bryant's theorem in \cite{bry}. 
\end{proof}

\begin{proof} {\it of Theorem \ref{DimOneCase}.}  

By the previous lemma, $\fix (f) = \{x_0\}$. Without loss of generality, let us consider the case $x_{0}=0$.  We deal first with the orientation preserving case.

Consider $h:\R \rightarrow \R$ defined as follow:
\begin{itemize}
\item $h(0)=0$.
\item Fix some point $p\in \R^{+}$ and define $h(p)=q$ where $q$ is an arbitrarily point of $\R^{+}$. Let $g_1:\R \rightarrow \R$ and  $g_2:\R \rightarrow \R$ defined as $g_1(x)=2x$ and $g_2(x)= x/2$. Then, if $f^{n}(p)$ tends to $\infty$ define $h(f^{n}(p))=g_1^{n}(h(p))$, $n\in \Z$ and if $f^{n}(p)$ tends to $0$ define $h(f^{n}(p))=g_2^{n}(h(p))$, $n\in \Z$.
\item In the open interval $(p,f(p))$ define $h$ as an arbitrary increasing homeomorphism.
\item Finally, let $x>0$ an arbitrary point. Then, $x\in [f^{k}(p),f^{k+1}(p)]$ for some $k\in \Z$. Thus we define $h$ in $x$ as $h(x)=g_i^{k}(h(f^{-k}(x))),\ i=1,2$, depending on whether $f^{n}(p)$ tends to $\infty$ or to $0$.
\item The construction is the same for $x\in \R^{-}$.
\item We claim that $0$ is a global repeller or attractor and then conjugate to $g_i, \ i=1,2$ respectively. If not there exists, 
$q<0$ and $p>0$ such that $d(q,p)<\delta$, $f^{n}(p)$ tends to $\infty, \ n\rightarrow +\infty$ and  $f^{n}(q)$ tends to $0, \ n\rightarrow +\infty$ (or viceversa). So, given an arbitrary $\delta>0$ consider a $\delta$-pseudo orbit $(x_n)$ defined as: $x_n= f^{n}(q)$ for $n\leq 0$, and $x_n= f^{n-1}(p)$ for $n\geq 1$. It is clear that there is not orbit that $\epsilon$-shadows  $(x_n)$ for a convenient $\epsilon$. This proves the claim.  

\end{itemize}

If $f$ reverses orientation, we know that $f^2$ is an orientation preserving Topologically Anosov homeomorphism and then conjugate to a homothety. We also have that $\fix (f^2)=\{0\}$ (if not we have a contradiction with the expansivity of $f^2$). Thus, every point $p\in \R$ verifies that $f^{2n}(p)$ tends monotonously to $\infty$ or to $0$ when $n$ tends to $+\infty$. But this implies that $f^{2n+1}(p)$ tends monotonously to $\infty$ or to $0$ when $n$ tends to $+\infty$. So, we are able to define a conjugation between $f$ and $g_1(x)=-2x$ if $0$ is a repeller ($g_2(x)= -x/2$ if $0$ is an attractor) in the same way we did in the orientation preserving  case.  
\end{proof}

 \section{Non accumulating future (or past) orbits}
 
Points with empty $\alpha$ or $\omega$-limits are specially important for the study of $\ta$ plane homeomorphisms. We explain why in this section.

\begin{lemma}\label{epsilon}
Let $f\in \homeo (\R ^2)$. If $\omega(x) = \emptyset$ there exists $\epsilon: \R^ 2 \to \R$ a continuous positive map with the property that if $y\neq x$, then there exists 
$ n>0$ such that 
$||f^n (x)- f^n (y)|| > \epsilon (f^n (x))$.  In particular, if $(x_n)_{n\in \Z}$ is a pseudo-orbit such that $x_n = f^n (x)$ for all $n\geq n_0$, then the only possible orbit that
$\epsilon$-shadows $(x_n)_{n\in \Z}$ is that of $x$.
 
\end{lemma}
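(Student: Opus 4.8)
The plan is to exploit that $\omega(x)=\emptyset$ forces the forward orbit to leave every compact set, so that the points $p_n:=f^n(x)$, $n\ge 0$, form a closed, discrete subset of $\R^2$ consisting of pairwise distinct points. They are distinct because a coincidence $p_i=p_j$ with $i<j$ would, by injectivity of $f$, make the orbit periodic and hence $\omega(x)$ a nonempty finite set; and the set has no accumulation points since any such point would belong to $\omega(x)$. The idea is then to choose $\epsilon$ so small along this orbit that the ``tube'' of closed balls $\overline{B}(p_n,\epsilon(p_n))$ around the forward orbit pulls back, under the homeomorphism $f^{-n}$, into arbitrarily small neighborhoods of $x$; a point $y$ whose forward orbit never escapes this tube will then be forced to coincide with $x$.

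First I would fix the radii. For each $n\ge 1$ the map $f^{-n}$ is continuous and satisfies $f^{-n}(p_n)=x$, so by continuity at $p_n$ there is $r_n\in(0,1]$ with $f^{-n}\big(\overline{B}(p_n,r_n)\big)\subseteq \overline{B}(x,1/n)$. Next I would produce a continuous, strictly positive $\epsilon:\R^2\to\R$ with $\epsilon(p_n)\le r_n$ for every $n\ge1$. This is where the discreteness of $\{p_n\}$ is used: choosing pairwise disjoint open balls $U_n\ni p_n$ (possible since the set is discrete and $\|p_n\|\to\infty$) and a bump function $\theta_n$ supported in $U_n$ with $\theta_n(p_n)=1$ and $0\le\theta_n\le1$, the function equal to $1$ off $\bigcup_n U_n$ and to $1-(1-r_n)\theta_n$ on each $U_n$ is continuous, strictly positive, and takes the value $r_n$ at $p_n$. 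Any Tietze-type extension would serve equally well.

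With $\epsilon$ in hand I would verify the stated property by contraposition. Suppose $y$ satisfies $\|f^n(x)-f^n(y)\|\le \epsilon(f^n(x))$ for all $n>0$. Since $\epsilon(p_n)\le r_n$, this gives $f^n(y)\in \overline{B}(p_n,r_n)$, hence $y=f^{-n}(f^n(y))\in f^{-n}\big(\overline{B}(p_n,r_n)\big)\subseteq \overline{B}(x,1/n)$. As $x$ also lies in this last ball and $1/n\to0$, we conclude $\|x-y\|\le 1/n$ for all $n$, i.e. $y=x$. Equivalently, for every $y\ne x$ some $n>0$ satisfies $\|f^n(x)-f^n(y)\|>\epsilon(f^n(x))$, which is the claim. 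The ``in particular'' statement then follows at once: if $(x_n)_{n\in\Z}$ is a pseudo-orbit with $x_n=f^n(x)$ for $n\ge n_0$ and the orbit of some $z$ $\epsilon$-shadows it, then $\|f^n(x)-f^n(z)\|<\epsilon(f^n(x))$ for all $n\ge n_0$; running the same pullback estimate over the cofinite set $\{n\ge n_0\}$ (still letting $n\to\infty$) again yields $z=x$.

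The only genuinely delicate point is the global construction of $\epsilon$: it must be a single continuous, strictly positive function on all of $\R^2$ while being prescribed from above at the orbit points. I expect this to be the main obstacle only in a bookkeeping sense, since the closedness and discreteness of $\{p_n\}$, a direct consequence of $\omega(x)=\emptyset$, make the bump-function patching routine; everything else reduces to the continuity of the iterates $f^{-n}$ together with the fact that $1/n\to0$.
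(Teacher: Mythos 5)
Your proof is correct, but it takes a more direct route than the paper. The paper's proof builds a topological conjugation along the forward orbit: it constructs nested neighborhoods $V_n$ of $f^n(x)$ and a homeomorphism $h$ of $\bigcup_n V_n$ onto a subset of $\R^2$ satisfying $hf = Th$, where $T(u,v)=(u+1,v)$ is the unit translation, and then pulls back through $h$ a function $\tilde\epsilon$ that decays along the translation's orbits (so that $T$ is ``forward topologically expansive'' with respect to $\tilde\epsilon$); any $y$ whose forward orbit stays in the tube is then carried by $h$ to a $T$-orbit that stays $\tilde\epsilon$-close to $h(x)$'s orbit, forcing $y=x$. You bypass the conjugation entirely: you exploit the continuity of the single maps $f^{-n}$ at $p_n=f^n(x)$ to choose radii $r_n$ with $f^{-n}\bigl(\overline{B}(p_n,r_n)\bigr)\subseteq \overline{B}(x,1/n)$, and then any $y$ trapped in the $\epsilon$-tube lies in $\overline{B}(x,1/n)$ for every $n$, hence equals $x$. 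Both arguments rest on the same underlying freedom — since $\omega(x)=\emptyset$ the forward orbit is an infinite, closed, discrete set escaping to infinity, so $\epsilon$ may be prescribed arbitrarily small along it and extended continuously — but your pullback argument is more elementary and avoids the somewhat delicate inductive construction of $h$ and the $V_n$ (whose concluding step in the paper is in fact stated rather loosely). The one point you should make explicit is that the disjoint balls $U_n$ must be chosen with bounded radii so that the family is locally finite (which follows from $\|p_n\|\to\infty$); pairwise disjointness alone does not guarantee continuity of the patched function. With that said, your bump-function construction and the passage to the ``in particular'' clause (running the same estimate over $n\ge n_0$) are both sound.
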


\begin{proof}  First note $\omega(x) = \emptyset$ implies that there exists a family of pairwise disjoint open sets $(U_n)_{n\in \N}$ such that each $U_n$ is a neighborhood of $f^n (x)$.
We claim that there exists a family of open sets $(V_n)_{n\in \N}$ such that for all $n\in \N$, $V_n \subset U_n$, $V_n$ is a neighborhood of $f^n (x)$, and a continuous map 
$h:\cup_n V_n\to \R^2$ which is a homeomorphism onto its image such that $hf|_{\cup_n V_n}=Th$, where $T(x,y) = (x+1, y)$ for all $(x,y)\in \R ^2$.
Take a homeomorphism $h: U_0 \to B((0,0), 1/3)$, and let $V_0\subset U_0 $ be an open set containing $x$ such that $f(V_0)\subset U_1$.  Define $\tilde U_1 := f(V_0)$ and extend the homeomorphism
$h$ to $\tilde U_1$ as $h|_{\tilde U_1} = Thf^{-1}$.  Note that $hf|_ {V_0} = Th|_{V_0}$. We now define $V_1\subset \tilde U_1$ such that $f(V_1)\subset U_2$, let $\tilde U_2 = f (V_1)$
and extend $h$ to $\tilde U_2$ as $h|_{\tilde U_2} = Thf^{-1}$.  Inductively, if $h$ is defined on $\tilde U_i\subset U_i$, we extend $h$ to $\tilde U_{i+1}\subset U_{i+1}$ as follows.
We take $V_i \subset \tilde U_i$ such that $f(V_i) \subset U_{i+1}$ and let $\tilde U_{i+1} = f(V_i)$.  We then let $h|_{\tilde U_{i+1}} = Th f^{-1}$.  Note that for all $i$, 
$hf|_ {V_i} = Th|_{V_i}$.  This proves the claim. 

Now take $\tilde \epsilon: \R^ 2 \to \R$ a continuous positive map verifying that for all $n\in \N$, $B((k,0), \tilde \epsilon ((k,0)))\subset h (V_k)$ and also that if $y\neq x$, then there exists 
$ n>0$ such that 
$||T^n (x)- T^n (y)|| > \tilde \epsilon (T^n (x))$.  Finally, we define $\epsilon: \R^ 2 \to \R$ such that $B(f^n (x),\epsilon (f^n (x)))\subset h^{-1}(B((k,0)),\tilde \epsilon ((k,0)))$ 
and extend it
to a continuous positive map of $\R ^2$.  To check that this map satisfies the condition of the lemma, just notice that if for some $y$, $f^n (y)\in V_n$ for all $n\in N$, then 
$T^n h(y)= h (f^n (y))$ and $T$ does not satisfy the topological shadowing property..
\end{proof}

\begin{lemma}\label{inftyinfty}  Let $f\in \homeo (\R ^2)$ be a $\ta$ homeomorphism. If $\alpha (x) = \emptyset$, then $\omega (x)\neq \emptyset$.
 
\end{lemma}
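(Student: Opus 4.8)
The plan is to argue by contradiction: assume that, in addition to $\alpha(x)=\emptyset$, we also have $\omega(x)=\emptyset$, and derive a violation of the topological shadowing property. Observe first that $\alpha(x)=\emptyset$ is the same as $\omega_{f^{-1}}(x)=\emptyset$, so both the forward and the backward orbit of $x$ leave every compact set; consequently the full orbit $\{f^n(x):n\in\Z\}$ is a closed, discrete subset of $\R^2$. The guiding idea is that, near such an orbit, $f$ is conjugate to the integer translation $T(u,v)=(u+1,v)$, and $T$ does not satisfy the shadowing property (the same fact already used at the end of the proof of Lemma \ref{epsilon}); transferring a drifting, non-shadowable pseudo-orbit of $T$ back to $f$ will contradict the hypothesis that $f$ is $\ta$.

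Concretely, I would proceed in two stages. First, I use the rigidity of the past. Since $\omega_{f^{-1}}(x)=\emptyset$, applying Lemma \ref{epsilon} to the homeomorphism $f^{-1}$ and reindexing produces a continuous positive map $\epsilon_{1}$ with the following property: any orbit that $\epsilon_{1}$-shadows a pseudo-orbit $(x_n)$ with $x_n=f^n(x)$ for all $n\le n_0$ must be the orbit of $x$ itself. Second, I use the hypothesis $\omega(x)=\emptyset$ exactly as in Lemma \ref{epsilon} to obtain pairwise disjoint neighborhoods $V_n$ of $f^n(x)$ (for $n\ge 0$) and a homeomorphism $h$ onto a neighborhood of $\{(n,0):n\ge 0\}$ with $hf=Th$ and $h(f^n(x))=(n,0)$, together with the model function $\tilde\epsilon$ and a continuous positive $\epsilon_{2}\le\epsilon_{1}$ such that $B(f^n(x),\epsilon_{2}(f^n(x)))\subset h^{-1}(B((n,0),\tilde\epsilon((n,0))))$. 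Set $\epsilon=\epsilon_2$.

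With this $\epsilon$ fixed, the shadowing property of $f$ furnishes a continuous positive $\delta$. I then build a $\delta$-pseudo-orbit $(x_n)_{n\in\Z}$ that coincides with $f^n(x)$ for $n\le 0$ and, for $n\ge 0$, is the $h^{-1}$-image of a slowly drifting sequence $\gamma_n=(n,v_n)$ in the model, with $v_0=0$ and $v_n$ increasing so slowly that each model jump $|v_{n+1}-v_n|$ (hence, by uniform continuity of $h^{-1}$ on the relevant compact pieces, each jump $\|f(x_n)-x_{n+1}\|$) stays below $\delta$, yet $v_N$ is eventually large enough that $\gamma_N$ escapes the ball $B((N,0),\tilde\epsilon((N,0)))$ for some $N$. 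On the one hand, because $(x_n)$ agrees with $f^n(x)$ for $n\le 0$, the first stage forces the only candidate shadow to be the orbit of $x$. On the other hand, by the choice of $\epsilon$ the escape of $\gamma_N$ from $B((N,0),\tilde\epsilon((N,0)))$ means that $x_N\notin B(f^N(x),\epsilon(f^N(x)))$, so, using continuity of $\epsilon$, the orbit of $x$ fails to $\epsilon$-shadow $(x_n)$ at time $N$. Hence $(x_n)$ is a $\delta$-pseudo-orbit with no $\epsilon$-shadow, contradicting shadowing; therefore $\omega(x)\neq\emptyset$.

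The step I expect to be the main obstacle is the construction of the drift in the last paragraph: one must simultaneously keep the pseudo-orbit inside $\bigcup_{n\ge 0}V_n$ (so that $h^{-1}$ genuinely applies and the jumps are controlled by the model jumps) and push $\gamma_N$ out of the $\tilde\epsilon$-ball, even though these balls may shrink as $n\to\infty$. This is arrangeable because each $h(V_n)$ is an open neighborhood of $(n,0)$ strictly larger than $B((n,0),\tilde\epsilon((n,0)))$, leaving vertical room for a bounded drift that still exceeds the shadowing tolerance; the bookkeeping needed to make this precise, together with the passage from model jumps to genuine $\delta$-jumps via continuity of $h$ and $h^{-1}$, is the delicate part. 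A symmetric remark handles the parenthetical case in the section, proving that $\omega(x)=\emptyset$ implies $\alpha(x)\neq\emptyset$.
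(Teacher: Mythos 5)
Your proposal follows the paper for its first half but diverges in the second, and the divergence is real: both you and the paper use Lemma \ref{epsilon} applied to $f^{-1}$ (via $\alpha(x)=\emptyset$) to force any orbit shadowing a pseudo-orbit that agrees with the past of $x$ to be the orbit of $x$ itself. For the forward deviation, however, the paper does something much lighter than your drift: it defines the pseudo-orbit as $x_n=f^n(x)$ for $n<0$ and $x_n=f^n(y)$ for $n\geq 0$, where $y\neq x$ is any point of $B(x,\delta(x))$. Since the tail is a genuine orbit, the only jump is at time $0$ and is automatically smaller than $\delta(x)$, so no jump bookkeeping is needed at all; the failure of shadowing then comes from quoting Lemma \ref{epsilon} applied to $f$ (via $\omega(x)=\emptyset$): the forward orbits of $x$ and $y$ must eventually separate relative to $\epsilon$. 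You instead re-open the proof of Lemma \ref{epsilon}, extract the conjugacy $h$ with the translation $T$, and transplant the classical drifting pseudo-orbit showing that translations lack shadowing. This is a legitimate alternative and is more self-contained (it exhibits the non-shadowable pseudo-orbit explicitly rather than invoking the separation property as a black box), but it is exactly where the paper's choice saves all the work you flag as delicate.

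Two points in your sketch need repair before it is a proof. First, your drift cannot be increasing for all $n\geq 0$: nothing prevents $f$ from contracting transversally along the forward orbit, in which case the vertical extent of the model neighborhoods $h(V_n)$ shrinks to $0$, and a sequence with a fixed positive drift must eventually leave the tube $\bigcup_n h(V_n)$ where the conjugacy, hence your control of the jumps, is valid. The repair is to drift only up to the finite time $N$ at which the escape occurs and to set $x_n=f^{n-N}(x_N)$ for $n>N$ (a genuine orbit tail needs no conjugacy). With this stopping rule the navigation does succeed, for reasons slightly different from the "vertical room" you cite: the slices of the $h(V_n)$ are nested (since $h(V_{n+1})\subset Th(V_n)$ by construction), so for every $j\leq N$ the ceiling at time $j$ is at least the ceiling at time $N$, which exceeds $\tilde\epsilon((N,0))$; and since $\tilde\epsilon((n,0))\to 0$ while the first admissible drift step is a fixed positive number, a monotone drift capped just below the time-$N$ ceiling exceeds $\tilde\epsilon((N,0))$ once $N$ is large. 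Second, escaping $B((N,0),\tilde\epsilon((N,0)))$ only yields $\|x_N-f^N(x)\|\geq \epsilon(f^N(x))$, whereas failure of shadowing requires $\|x_N-f^N(x)\|\geq \epsilon(x_N)$, because $\epsilon$ is evaluated at the pseudo-orbit point; continuity of $\epsilon$ alone does not bridge this, and you must build the requirement into the construction of $\epsilon$ (possible, since the forward orbit is closed and discrete, one may demand that $\epsilon$-balls centered outside the model ball miss the $\epsilon$-ball around $f^N(x)$). In fairness, the paper's own one-line conclusion glosses over this same evaluation-point mismatch.
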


\begin{proof} 

 If $\alpha (x) = \omega(x) = \infty$,  by Lemma \ref{epsilon} there exists 
 $\epsilon: \R^ 2 \to \R$ a continuous positive map with the property that if $y\neq x$, then there exists
$ n>0$ such that 
$||f^n (x)- f^n (y)|| > \epsilon (f^n (x))$ and $||f^{-n} (x)- f^{-n} (y)|| > \epsilon (f^{-n} (x))$. Take $\delta: \R^ 2 \to \R$ a continuous positive map as in the 
definition of shadowing, and consider the following $\delta$-pseudo-orbit $(x_n)_{n\in \Z}$:  $x_n = f^{n} (x)$ for all $n<0$ ; $x_n= f^n (y)$ for all $n\geq 0$, where 
$y\in B (x, \delta (x))$.  Then, the orbit of $x$ must $\epsilon$-shadow this pseudo-orbit, but this is impossible by the choice of the map $\epsilon$.
\end{proof}

For the remainder of  this section $f\in \homeo (\R ^2)$ is assumed to be $\ta$ and $z_0\in \fix(f)$.

\begin{lemma}\label{stable} If there exists $z\in \R^2$ such that $\alpha (z) = \emptyset$ and $\omega(z) = \{z_0\}$, 
then $z_0$ is Lyapunov stable. 
 
\end{lemma}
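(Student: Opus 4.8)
The plan is to argue by contradiction. Suppose $z_0$ is not Lyapunov stable. Then there is an open neighborhood $V$ of $z_0$ such that for every neighborhood $W$ of $z_0$ there exist $w\in W$ and $k\geq 0$ with $f^{k}(w)\notin V$. Shrinking $V$ if necessary, I may assume $V=B(z_0,r)$ for some $r>0$, since a point leaving $V$ also leaves every smaller ball centered at $z_0$. The guiding idea is that the orbit of $z$ is a ``connection from infinity to $z_0$'': its past escapes every compact set (because $\alpha(z)=\emptyset$) while its future converges to $z_0$. I will build a pseudo-orbit that copies the orbit of $z$ all the way from infinity until it is deep inside $V$, and then makes one small jump onto a point $w$ near $z_0$ whose orbit leaves $V$. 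Backward uniqueness will force any shadowing orbit to be the orbit of $z$, which stays near $z_0$ in the future and therefore cannot follow the escaping tail.

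To set this up I first fix the separating function. Since $\alpha(z)=\emptyset$, the point $z$ has empty $\omega$-limit for $f^{-1}$, so Lemma \ref{epsilon} applied to $f^{-1}$ produces a continuous positive $\epsilon$ with the following property: whenever a pseudo-orbit $(x_n)_{n\in\Z}$ satisfies $x_n=f^{n}(z)$ for all $n\leq -n_0$, the only orbit that can $\epsilon$-shadow it is the orbit of $z$. Here I will use the key feature, visible in the proof of Lemma \ref{epsilon}, that the separation between $z$ and any other point occurs at arbitrarily large negative times, because the auxiliary function $\tilde\epsilon$ decays to $0$ at infinity along the conjugating translation $T$; this is precisely what makes the uniqueness usable no matter where the pseudo-orbit breaks off from the orbit of $z$. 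Replacing $\epsilon$ by $\min(\epsilon,r/4)$, which only strengthens the uniqueness conclusion (a smaller tolerance makes shadowing harder), I may also assume $\epsilon\leq r/4$ everywhere. I then let $\delta$ be a continuous positive function as in the topological shadowing property for this $\epsilon$.

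With $\delta$ in hand I construct the pseudo-orbit. Since $\omega(z)=\{z_0\}$ and $\delta$ is continuous with $\delta(z_0)>0$, I choose $M$ so large that $f^{m}(z)\in B(z_0,\rho')$ for all $m\geq M$ and $\delta(f^{M+1}(z))>\delta(z_0)/2$, where $\rho'$ is a small radius to be matched with the jump. Using instability with $W=B(z_0,\rho)$ for small $\rho$, I pick $w\in B(z_0,\rho)$ and an escape time $k\geq 0$ with $f^{k}(w)\notin V$. I then set $x_n=f^{n}(z)$ for $n\leq M$ and $x_n=f^{\,n-(M+1)}(w)$ for $n\geq M+1$. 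This sequence is an exact orbit except at the single index $n=M$, where the error is $||f^{M+1}(z)-w||\leq \rho'+\rho$; choosing $\rho',\rho$ with $\rho'+\rho<\delta(z_0)/2$ makes $(x_n)$ a genuine $\delta$-pseudo-orbit on all of $\Z$.

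Finally I extract the contradiction. Because $x_n=f^{n}(z)$ for all $n\leq M$, and in particular for all $n\leq -n_0$, backward uniqueness forces the orbit of $z$ to be the only possible $\epsilon$-shadowing orbit. But at the escape index $N=M+1+k$ we have $x_N=f^{k}(w)\notin B(z_0,r)$, hence $||x_N-z_0||\geq r$, while $f^{N}(z)\in B(z_0,\rho')$ with $\rho'<r/2$, so $||x_N-f^{N}(z)||\geq r-\rho'>r/2>\epsilon(x_N)$. Thus the orbit of $z$ fails to $\epsilon$-shadow $(x_n)$, so no orbit does, contradicting the topological shadowing property; hence $z_0$ must be Lyapunov stable. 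The main obstacle, and the point demanding care, is reconciling two size constraints at once: the break from the orbit of $z$ must be small enough (less than $\delta$) to keep $(x_n)$ a legitimate $\delta$-pseudo-orbit, yet the subsequent escape must produce a displacement bounded below (by $r/2$) relative to $f^{N}(z)$. Both are secured by $\omega(z)=\{z_0\}$ together with the continuity of $\delta$: convergence to $z_0$ lets the jump be arbitrarily small, while leaving the fixed ball $V$ guarantees the quantitative escape.
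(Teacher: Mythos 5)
Your proof is correct and takes essentially the same route as the paper's: both invoke Lemma \ref{epsilon} in its backward version (using $\alpha(z)=\emptyset$) to make the orbit of $z$ the unique candidate shadowing orbit, then build a $\delta$-pseudo-orbit that copies the orbit of $z$ until it is very close to $z_0$ and jumps to a nearby point whose forward orbit escapes a fixed ball, contradicting the topological shadowing property. The only cosmetic difference is that you cap the expansivity function by $r/4$ globally and argue by contradiction, whereas the paper fixes the stability tolerance $\epsilon$ and constructs a tailored function $\xi$ with $\xi(z_0)=\epsilon$ agreeing with $\mathcal{E}$ along the far past of the orbit of $z$; your explicit bookkeeping at the escape index in fact makes precise the step the paper leaves implicit in its final ``otherwise'' clause.
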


\begin{proof}  By Lemma \ref{epsilon} there exists $\mathcal{E}: \R^ 2 \to \R$ a continuous positive map with the property that
 if $(x_n)_{n\in \Z}$ is a pseudo-orbit such that $x_n = f^n (z)$ for all $n\leq n_0$, then the only possible orbit that $\mathcal{E}$-shadows $(x_n)_{n\in\Z}$ is that of $z$ because $\alpha(z)=\emptyset$.   Given $\epsilon >0$, take $n_0$ such that $f^n (z)\notin B(z_0, \epsilon)$ for all $n\leq n_0$, and construct $\xi: \R^ 2 \to \R$ a continuous 
positive map such that $\xi (z_0) = \epsilon$ and $\xi (f^n (z)) = 
\mathcal{E}(f^n (z))$ for all $n\leq n_0$.  Take $\delta: \R^ 2 \to \R$ a continuous positive map,
such that every $\delta$-pseudo-orbit is $\xi$-shadowed by an orbit.  It follows that $y\in B(z_0, \delta (z_0))$ implies $f^n (y)\in B (z_0, \epsilon)$ for all $n\geq 0$ 
(otherwise there exists a $\delta$-pseudo-orbit
that cannot be $\xi$-shadowed).
\end{proof}

\begin{lemma}\label{x0x0}  If there exists $x\neq z_0$ such that $\alpha (x) = \omega(x) = \{z_0\}$, then there exists $y_0\neq z_0$ and $z$ such that $y_0\in \omega(z)$.
 
\end{lemma}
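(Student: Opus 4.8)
The plan is to exploit the homoclinic nature of the orbit of $x$. Since $\alpha(x)=\omega(x)=\{z_0\}$, the forward and backward tails of the orbit of $x$ accumulate only at the fixed point $z_0$, and the first task is to upgrade this to genuine convergence $f^n(x)\to z_0$ and $f^{-n}(x)\to z_0$ as $n\to+\infty$. This is the delicate step, and the one I expect to be the main obstacle: a priori $\omega(x)=\{z_0\}$ only asserts that $z_0$ is the unique \emph{finite} accumulation point, so one must rule out that a subsequence of the orbit escapes to infinity. I would handle this by passing to the one–point compactification $S^2=\R^2\cup\{\infty\}$, where $\hat f(\infty)=\infty$; since both $z_0$ and $\infty$ are fixed, a standard argument shows the orbit cannot transition infinitely often between disjoint (forward‑invariant) neighborhoods of $z_0$ and $\infty$, so it is eventually trapped near $z_0$ and hence converges. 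The same applies to the backward orbit.

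Granting convergence, the idea is to manufacture a recurrent pseudo-orbit that passes near $x$ infinitely often, shadow it, and read off a nontrivial $\omega$-limit. Concretely, I would fix a continuous positive $\epsilon:\R^2\to\R$ with $\epsilon(x)<\|x-z_0\|$, and let $\delta$ be the function provided by the topological shadowing property for this $\epsilon$. Because $f^{N+1}(x)\to z_0$ and $f^{-N}(x)\to z_0$, while $\delta$ is continuous and positive (so $\delta(f^{N+1}(x))\to\delta(z_0)>0$), for $N$ large we have $\|f^{N+1}(x)-f^{-N}(x)\|<\delta(f^{N+1}(x))$. Fixing such an $N$, define the bi-infinite $(2N+1)$-periodic pseudo-orbit $(x_n)_{n\in\Z}$ whose fundamental block is $f^{-N}(x),f^{-N+1}(x),\dots,f^{N}(x)$: inside a block consecutive points are exact images under $f$, and at the junction the jump from $f^{N}(x)$ to $f^{-N}(x)$ is controlled by the inequality above, so $(x_n)$ is indeed a $\delta$-pseudo-orbit.

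By the topological shadowing property there is $z\in\R^2$ with $\|x_n-f^n(z)\|<\epsilon(x_n)$ for all $n\in\Z$. At the block-center indices $n_m=N+m(2N+1)$, $m\geq 0$, one has $x_{n_m}=x$, hence $f^{n_m}(z)\in\overline{B(x,\epsilon(x))}$. This closed ball is compact and, because $\epsilon(x)<\|x-z_0\|$, does not contain $z_0$. Therefore the sequence $f^{n_m}(z)$ admits a subsequential limit $y_0\in\overline{B(x,\epsilon(x))}$, which lies in $\omega(z)$ and satisfies $y_0\neq z_0$. This produces the required point, so the conclusion follows. Apart from the convergence issue flagged above, the only routine checks are the junction estimate (continuity and positivity of $\delta$ at $z_0$) and the fact that shadowing is available two-sidedly, both of which are immediate from the definitions in the excerpt.
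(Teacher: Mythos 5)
Your proof is correct and follows essentially the same route as the paper's: build a periodic $\delta$-pseudo-orbit that recirculates a finite segment of the homoclinic orbit of $x$, shadow it, and use compactness of a closed $\epsilon$-ball avoiding $z_0$ to produce $y_0\in\omega(z)$ with $y_0\neq z_0$. The only real divergence is your preliminary ``convergence upgrade,'' which you flag as the main obstacle; the paper avoids it entirely. Its pseudo-orbit inserts $z_0$ itself as a waypoint and closes up using two independently chosen indices, i.e.\ integers $N,M$ with $f^{-N}(x)$ and $f^{M}(x)$ both near $z_0$; for that, subsequential accumulation --- exactly what $z_0\in\alpha(x)\cap\omega(x)$ gives --- suffices, so no compactification argument is needed, and the symmetric choice of the single exponent $N$ in your fundamental block is the only reason your version requires genuine convergence. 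Your convergence claim is nonetheless true, by the standard fact that an $\omega$-limit set in a compact space cannot consist of two distinct fixed points; just note that this argument uses small disjoint balls around $z_0$ and $\infty$ together with continuity of the extended map at those fixed points, not ``forward-invariant'' neighborhoods, which need not exist.
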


\begin{proof} Suppose that  $\alpha (x) = \omega(x) = \{z_0\}$ and take $\epsilon: \R^ 2 \to \R$ a continuous 
positive map such that the entire orbit of $x$ is not contained in $B_0=B(z_0, \epsilon (z_0))$.  Modifying the function $\epsilon$ if necessary, we may assume that
$B(f^n (x), \epsilon(f^n (x)))\cap B_0 = \emptyset$ for all $n$ such that $f^n (x)\notin B_0$.  Take $\delta: \R^ 2 \to \R$ a continuous positive map as in the 
definition of shadowing, and take positive integers $N,M$ big enough such that $d(f^{-N} (x), f^M (x))<\min \{\delta(z): z\in \overline{B_0}\}$. Then, $w_0 = z_0 , w_{i+1} = f^{-N+i}(x), 
i= 0, \ldots, M+N -1$, $w_M = z_0$ defines a periodic $\delta$-pseudo-orbit $(w_n)_{n\in \Z}$.
 Note that if an orbit $z$, $\epsilon$-shadows this pseudo-orbit, it must visit
infinitely many times any $B(f^n (x), \epsilon(f^n (x)))$  such that $f^n (x)\notin B_0$.  Therefore, there exists $y_0\neq z_0$ such that $y_0\in \omega(z)$.
\end{proof}

\begin{lemma}\label{infi} If $\Omega(f) = \{z_0\}$,  then there exists $x\in \R^2$ such that $\alpha(x) = \emptyset$ or $\omega(x) = \emptyset$.
 
\end{lemma}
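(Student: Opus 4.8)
The plan is to argue by contradiction, exploiting the elementary fact that limit sets are always trapped inside the nonwandering set. First I would recall the standard observation that for any homeomorphism one has $\omega(x)\subseteq \Omega(f)$ and $\alpha(x)\subseteq \Omega(f)$ for every $x\in\R^2$: if $p\in\omega(x)$ and $U$ is any neighborhood of $p$, then the forward orbit of $x$ meets $U$ at infinitely many times, so one can pick $n_1<n_2$ with $f^{n_1}(x),f^{n_2}(x)\in U$, whence $f^{n_2-n_1}(U)\cap U\neq\emptyset$ with $n_2-n_1$ arbitrarily large; this shows $p\in\Omega(f)$, and the argument for $\alpha(x)$ is symmetric. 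Since by hypothesis $\Omega(f)=\{z_0\}$, this containment forces the dichotomy $\omega(x)\in\{\emptyset,\{z_0\}\}$ and $\alpha(x)\in\{\emptyset,\{z_0\}\}$ for every $x\in\R^2$.

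Next I would suppose, toward a contradiction, that the conclusion of the lemma fails, that is, that every $x\in\R^2$ satisfies both $\omega(x)\neq\emptyset$ and $\alpha(x)\neq\emptyset$. Combined with the dichotomy above, this means precisely that $\alpha(x)=\omega(x)=\{z_0\}$ for every point $x$. In particular, choosing any point $x\neq z_0$ (which exists because $\R^2\neq\{z_0\}$), the hypotheses of Lemma \ref{x0x0} are satisfied for this $x$.

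Applying Lemma \ref{x0x0} then yields a point $y_0\neq z_0$ and a point $z$ with $y_0\in\omega(z)$. But the containment from the first paragraph gives $\omega(z)\subseteq\Omega(f)=\{z_0\}$, so $y_0=z_0$, contradicting $y_0\neq z_0$. This contradiction shows that there must exist $x\in\R^2$ with $\omega(x)=\emptyset$ or $\alpha(x)=\emptyset$, which is the assertion of the lemma.

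I do not expect a genuine obstacle in this argument: there is no computation, and the whole proof rests on two observations. The first is the containment $\omega(x),\alpha(x)\subseteq\Omega(f)$, which is what reduces every limit set to one of only two possibilities and thereby produces the clean dichotomy. The second is simply recognizing that, once every orbit is assumed bi-asymptotic to $z_0$, the hypothesis of Lemma \ref{x0x0} holds automatically while its conclusion is flatly incompatible with $\Omega(f)=\{z_0\}$. The only point meriting a careful (if routine) write-up is the containment in the first paragraph.
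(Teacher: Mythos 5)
Your proposal is correct and follows essentially the same route as the paper: both arguments rest on the containment $\alpha(x)\cup\omega(x)\subseteq\Omega(f)=\{z_0\}$, which forces each limit set to be empty or $\{z_0\}$, and then invoke Lemma \ref{x0x0} to rule out $\alpha(x)=\omega(x)=\{z_0\}$ for $x\neq z_0$. The only difference is cosmetic: you phrase it as a proof by contradiction over all points, while the paper states the same incompatibility directly.
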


\begin{proof}  First note that as $\Omega(f) = \{z_0\}$, for all $x$ the sets $\alpha (x)$ and $\omega(x)$ are either empty or the single point $z_0$ (as  
$y \in \alpha(x)\cup \omega(x)$ implies $y \in \Omega(f)$).

We finish the proof by pointing out  that if $\alpha (x) = \{z_0\}$, then $\omega (x) = \emptyset$ because of the preceeding lemma. 
\end{proof}

\begin{lemma}  If $\Omega (f) \neq \{z_0\}$, then there exists $y_0\neq z_0$ and $z$ such that $y_0\in \omega(z)$.
 
\end{lemma}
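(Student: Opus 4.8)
The plan is to use the topological shadowing property to upgrade the recurrence coming from a nonwandering point into a genuine $\omega$-limit point, closely following the periodic-pseudo-orbit construction of Lemma \ref{x0x0}. First I would note that since $z_0 \in \fix(f) \subset \Omega(f)$, the hypothesis $\Omega(f) \neq \{z_0\}$ forces the existence of a nonwandering point $w \neq z_0$. By the definition of the nonwandering set, every neighborhood $U$ of $w$ satisfies $f^n(U) \cap U \neq \emptyset$ for some $n \geq 1$; equivalently, there exist $u \in U$ and an integer $n \geq 1$ with $f^n(u) \in U$. This is exactly the ``almost periodic return'' that I want to feed into shadowing.

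Next I would fix a continuous positive map $\epsilon: \R^2 \to \R$ small enough near $w$ that the closed ball $\overline{B(u', \epsilon(u'))}$ avoids $z_0$ for every point $u'$ close to $w$ (this is possible because $w \neq z_0$, using continuity of $\epsilon$ and shrinking its values near $w$ below a fraction of $\|w - z_0\|$). Let $\delta: \R^2 \to \R$ be the continuous positive map furnished by the topological shadowing property for this $\epsilon$. I would then shrink the neighborhood $U = B(w, r)$ of $w$ so that $2r$ is smaller than the minimum of $\delta$ on a compact neighborhood of $w$, and apply nonwandering to obtain $u \in U$ and $n \geq 1$ with $f^n(u) \in U$.

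Then I would build the periodic $\delta$-pseudo-orbit of period $n$ whose fundamental domain is $u, f(u), \ldots, f^{n-1}(u)$ (extended periodically to all of $\Z$): the jumps inside the domain are exact, while the single return jump from $f^{n-1}(u)$ to $u$ has size $\|f^n(u) - u\| \leq \|f^n(u) - w\| + \|w - u\| < 2r < \delta(f^n(u))$ by the choice of $r$. By shadowing there is an orbit $z$ with $\|f^{kn}(z) - u\| < \epsilon(u)$ for all $k \geq 0$, so the iterates $f^{kn}(z)$ all lie in the compact set $\overline{B(u, \epsilon(u))}$; a subsequence converges to some $y_0 \in \omega(z) \cap \overline{B(u, \epsilon(u))}$. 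Since $\overline{B(u, \epsilon(u))}$ was arranged to avoid $z_0$, we conclude $y_0 \neq z_0$, which is what the lemma asserts.

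The main obstacle is purely the bookkeeping that makes the return jump a legitimate $\delta$-pseudo-orbit in the paper's sense, where $\delta$ is evaluated at $f(x_n)$ rather than being a fixed constant, while simultaneously keeping the recurrence ball away from $z_0$. Both requirements are handled by the continuity and strict positivity of $\delta$ and $\epsilon$ on a compact neighborhood of $w$, but one must fix $\epsilon$ and $\delta$ \emph{first} and only afterwards choose the radius $r$ of $U$; the order of these quantifiers is the only delicate point.
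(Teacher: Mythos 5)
Your proof is correct and takes essentially the same approach as the paper's: pick a nonwandering point $w \neq z_0$, turn one of its returns into a periodic $\delta$-pseudo-orbit, shadow it, and extract an $\omega$-limit point of the shadowing orbit inside a ball kept disjoint from $z_0$. The only cosmetic difference is that the paper splices the point $x = w$ itself into the pseudo-orbit (fixing $\epsilon$ at $z_0$, $x$ and $f(x)$ with pairwise disjoint balls), whereas you use only the returning segment $u, f(u), \ldots, f^{n-1}(u)$ and shrink the neighborhood of $w$ after $\epsilon$ and $\delta$ are fixed; both handle the quantifier order correctly.
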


\begin{proof}  Take $x\neq z_0\in \Omega (f)$ and note that we may assume that $x\notin \fix (f)$ (otherwise we are done with the proof).  Take $\alpha >0$ such that $B(z_0, \alpha)$, $B(x, \alpha)$ and $B (f(x), \alpha)$ are 
pairwise disjoint.  Take $\epsilon: \R^ 2 \to \R$ a continuous 
positive map such that $\epsilon (z_0) = \epsilon (x) = \epsilon (f(x))= \alpha $ and  take $\delta: \R^ 2 \to \R$ a continuous positive map as in the 
definition of shadowing. Take $0<\beta < \delta(x)/2$ such that $f(B(x,\beta))\subset B(f(x), \delta (f(x))/2)$.  As $x\in\Omega(f)$, there exists $y$ and $n>0$ such that both $y$ and 
$f^n (y)$ belong to $B(x, \beta)$.  Then, $f(y)$  belongs to $B(f(x), \delta(f(x))/2)$.  Then construct the following periodic $\delta$-pseudo orbit:  $x_0 = x$, 
$x_i= f^i (y)$ for all $i= 1, \ldots, n-1$, $x_n = x = x_0$.  This pseudo-orbit must be $\epsilon$-shadowed by an orbit $z$.  Therefore, the orbit of $z$ must visit infinitely many times 
$B(x, \alpha)$, and the result follows.
\end{proof}


We obtain our first result: 
 
\begin{teo}  If $\Omega(f) = \{z_0\}$, then $f$ is conjugate to a homothety or  a reverse homothety. 
 
\end{teo}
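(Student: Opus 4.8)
The plan is to first use the lemmas of this section to establish a dichotomy — that $z_0$ is either a global attractor or a global repeller — and then to build an explicit conjugacy in the attractor case, the repeller case following by passing to $f^{-1}$. First I would record a trichotomy for limit sets. Since $\Omega(f)=\{z_0\}$, for every $x$ both $\alpha(x)$ and $\omega(x)$ lie in $\{z_0\}$, hence each is either empty or equal to $\{z_0\}$. Lemma~\ref{inftyinfty} rules out that both are empty, while Lemma~\ref{x0x0} rules out that both equal $\{z_0\}$ (since the $y_0\in\omega(z)$ it produces lies in $\Omega(f)=\{z_0\}$, forcing $y_0=z_0$, a contradiction). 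Therefore, for each $x\neq z_0$ \emph{exactly one} of $\alpha(x),\omega(x)$ is empty: call $x$ \emph{repelled} if $\alpha(x)=\{z_0\}$, $\omega(x)=\emptyset$, and \emph{attracted} if $\alpha(x)=\emptyset$, $\omega(x)=\{z_0\}$.

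Next I would rule out coexistence of the two types. If an attracted point $a$ existed, Lemma~\ref{stable} (with $z=a$) would give that $z_0$ is Lyapunov stable for $f$; if a repelled point $r$ existed, then $r$ is attracted for $f^{-1}$, which is again $\ta$, so Lemma~\ref{stable} applied to $f^{-1}$ would give that $z_0$ is Lyapunov stable for $f^{-1}$. But $z_0$ cannot be stable for both: writing $\epsilon_0=\epsilon(z_0)>0$ for the expansivity function at $z_0$, two-sided stability would produce, for every $y$ close enough to $z_0$, a full orbit $(f^n(y))_{n\in\Z}$ staying within $\epsilon_0/2$ of $z_0=f^n(z_0)$, contradicting topological expansiveness applied to the pair $z_0,y$. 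Hence all points of $\R^2\setminus\{z_0\}$ are of the same type, so $z_0$ is a global attractor or a global repeller. Replacing $f$ by $f^{-1}$ if necessary — which turns a homothety or reverse homothety model into its inverse, again a homothety or reverse homothety — I may assume $f^n(x)\to z_0$ for every $x$, so that $z_0$ is asymptotically stable with basin $\R^2$.

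For the conjugacy I would produce a closed topological disk $D$ with $z_0\in\inte(D)$, $f(D)\subset\inte(D)$, $\bigcap_{n\geq0}f^n(D)=\{z_0\}$ and $\bigcup_{n\geq0}f^{-n}(D)=\R^2$ (the last holding because every orbit eventually enters $\inte(D)$). Then $A=\overline{D\setminus f(D)}$ is a fundamental annulus with boundary circles $\partial D$ and $f(\partial D)$, and the translates $f^k(A)$, $k\in\Z$, tile $\R^2\setminus\{z_0\}$. Let $g$ be $z\mapsto z/2$ if $f$ preserves orientation and $z\mapsto\bar z/2$ if $f$ reverses it. I would choose a homeomorphism $h_0:A\to\{\,1/2\leq|z|\leq1\,\}$ carrying $\partial D$ to $\{|z|=1\}$ and satisfying $h_0\circ f=g\circ h_0$ on $\partial D$ (pick $h_0$ freely on $\partial D$, force its value on $f(\partial D)$ by this relation, and extend across the annulus — possible precisely because the orientation of $g$ is chosen to match that of $f$). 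Extending by $h|_{f^k(A)}=g^k\circ h_0\circ f^{-k}$ and $h(z_0)=0$, the relation $h_0 f=g h_0$ makes $h$ well defined on the shared boundary circles, and continuity at $z_0$ and $0$ holds since $f^k(A)\to z_0$ and $g^k(\{1/2\leq|z|\leq1\})\to0$. Thus $hf=gh$, proving $f$ conjugate to a homothety or a reverse homothety.

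The main obstacle is the construction of the invariant fundamental disk $D$: upgrading asymptotic stability to a genuine closed disk with \emph{Jordan-curve} boundary satisfying the \emph{strict} inclusion $f(D)\subset\inte(D)$, so that $A$ is truly a closed annulus on which the extension runs. This is a planar-topology step — filling bounded complementary components, Schoenflies, and a Brouwer/Brown-type argument for asymptotically stable planar attractors — rather than a dynamical one; once $D$ is secured, the tiling and equivariant gluing above are routine.
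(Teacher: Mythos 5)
Your dynamical reduction is correct and is essentially the paper's own: the same three lemmas are used in the same way. The trichotomy (each $x\neq z_0$ has exactly one of $\alpha(x),\omega(x)$ empty) follows from Lemma \ref{inftyinfty} and Lemma \ref{x0x0} exactly as you argue; the paper's Lemma \ref{infi} is a weaker packaging of this. Your exclusion of coexistence is a mild variant: the paper stays inside $f$ (Lyapunov stability plus $\alpha(z)=\{z_0\}$ forces the forward orbit of $z$ to be bounded, hence $\omega(z)=\{z_0\}$, contradicting Lemma \ref{x0x0}), while you apply Lemma \ref{stable} to both $f$ and $f^{-1}$ and contradict expansivity with a full orbit confined near $z_0$. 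Your version is valid, but it tacitly requires that $f^{-1}$ is again \ta\ (expansivity is manifestly symmetric, but shadowing for $f^{-1}$ needs a short argument reversing pseudo-orbits and readjusting $\delta$ by continuity), or equivalently a time-reversed restatement of Lemma \ref{stable}; since the paper uses the same kind of time symmetry without comment, this is not a serious objection.

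The genuine gap is the last step. At the point where you have that $z_0$ is a globally attracting, Lyapunov stable fixed point, the paper stops and invokes Ker\'ekj\'art\'o's theorem (\cite{k1}, \cite{k2}, or \cite{g} for a modern proof), whose hypothesis is exactly asymptotic stability with basin $\R^2$ and whose conclusion is exactly conjugacy to $z\mapsto z/2$ or $z\mapsto \bar z/2$. You instead set out to reprove it, and the one step you defer --- existence of a closed topological disk $D$ with $z_0\in\inte(D)$, $f(D)\subset\inte(D)$, $\bigcap_{n\geq 0}f^n(D)=\{z_0\}$ and $\bigcup_{n\geq 0}f^{-n}(D)=\R^2$ --- is not a routine ``planar-topology step'': it is the essential content of that theorem. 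Lyapunov stability hands you forward-invariant open sets such as $\bigcup_{n\geq 0}f^n(B(z_0,\delta))$, but these need not be Jordan domains, and upgrading them to a disk satisfying the strict inclusion $f(D)\subset\inte(D)$ is precisely where the filling/Schoenflies/prime-end work in \cite{g} is concentrated. (Once such a $D$ is granted, your fundamental-annulus gluing is standard and correct: $\overline{D\setminus f(D)}$ is a closed annulus, matching the orientation of $g$ to that of $f$ lets the boundary data extend, and the nested compact disks shrinking to $z_0$ give continuity at the fixed point.) So as written the proposal is incomplete; it becomes a complete proof --- and then essentially the paper's proof --- the moment the construction of $D$ is replaced by a citation of Ker\'ekj\'art\'o's theorem.
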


\begin{proof}  As was already pointed out,   for all $x$ the sets $\alpha (x)$ and $\omega(x)$ are either empty or the single point $z_0$ (as  
$y \in \alpha(x)\cup \omega(x)$ implies $y \in \Omega(f)$).

By Lemma \ref{infi}  there exists $x\in \R^2$ such that $\alpha(x) = \emptyset$ or $\omega(x) = \emptyset$ 

Moreover, if $\alpha (x) = \emptyset$, then $\omega (x) = \{z_0\}$ (indeed, Lemma \ref{inftyinfty} implies that $\omega(x)\neq \emptyset$).

Finally, we claim that if there exists $x$ such that $\alpha (x) = \emptyset$ (and therefore $\omega (x) = \{z_0\}$), then every $z\neq z_0$ verifies $\alpha (z) = \emptyset$ (and therefore
$\omega (z) = \{z_0\}$).  Indeed, by Lemma \ref{stable}, $x_0$ is Lyapunov stable, which implies that any $z\neq z_0$ such that $\alpha (z) =\{z_0\}$ must verify also $\omega (z) =\{z_0\}$,
which is impossible by Lemma \ref{x0x0}.  We conclude that if there exists $x$ such that $\alpha (x) = \emptyset$, then $z_0$ is a {\it global attractor}, that is, 
$\lim_{n\to +\infty} f^n (z) = z_0$ for all $z\in \R^2$.

If there is no $x$ such that $\alpha (x) = \emptyset$, then $\alpha(x) = \{z_0\}$ for all $x$, and therefore  $\omega (x) = \emptyset$ for all $x$.

We have proven that  $z_0$ is either a global attractor or a global repeller which is asymptotically stable. The result now follow from Ker\'kj\'art\'o's  theorem 
(\cite{k1}, \cite{k2}, or for a more modern approach \cite{g}).
 \end{proof}


\begin{clly}   If $z_0$ satisfies $\lim _{n\to +\infty} f^n (z) = z_0$ for all $z\in \R ^2$, then $f$ is conjugate to a homothety or a reverse homothety.
 
\end{clly}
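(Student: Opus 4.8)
The plan is to reduce this corollary to the Theorem just proved, by showing that the hypothesis forces $\Omega(f) = \{z_0\}$. The assumption $\lim_{n\to +\infty} f^n(z) = z_0$ for every $z\in \R^2$ is precisely the statement that $z_0$ is a global attractor, i.e. that $\omega(z) = \{z_0\}$ for all $z\in \R^2$. In particular, no $\omega$-limit set contains a point different from $z_0$.

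First I would invoke the last Lemma of this section (the one asserting that whenever $\Omega(f)\neq \{z_0\}$ there exist $z$ and $y_0\neq z_0$ with $y_0\in\omega(z)$) in contrapositive form. Since the hypothesis guarantees $\omega(z) = \{z_0\}$ for every $z$, no such $y_0$ can exist; hence the conclusion of that Lemma fails, and therefore its hypothesis must fail as well, giving $\Omega(f)=\{z_0\}$. The reverse inclusion $\{z_0\}\subseteq \Omega(f)$ is automatic because $z_0\in\fix(f)$. Once $\Omega(f)=\{z_0\}$ is established, the statement follows verbatim from the preceding Theorem: $f$ is conjugate to a homothety or to a reverse homothety.

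I expect no genuine obstacle here, since the content of the corollary is entirely absorbed by the Theorem; the only thing to verify carefully is that global attraction is incompatible with the existence of a nontrivial $\omega$-limit point, which is immediate from the definition of a global attractor. One could instead attempt to apply Ker\'ekj\'art\'o's theorem directly, as $z_0$ is by hypothesis a global attractor; but this would require re-establishing the Lyapunov stability (hence asymptotic stability) of $z_0$ via Lemma \ref{stable}, which in turn needs a point with empty $\alpha$-limit and $\omega$-limit equal to $\{z_0\}$, information one only secures after knowing $\Omega(f)=\{z_0\}$. Reducing to the Theorem avoids duplicating that asymptotic-stability argument, so it is the route I would take.
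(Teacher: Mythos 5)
Your proposal is correct and follows essentially the same route as the paper, whose entire proof is the one-line observation that the hypothesis gives $\Omega(f)=\{z_0\}$, after which the preceding Theorem applies. The only difference is that you justify the identity $\Omega(f)=\{z_0\}$ carefully (via the contrapositive of the final Lemma of the section), whereas the paper asserts it without comment; this extra care is reasonable, since for a general plane homeomorphism pointwise attraction to a fixed point does not by itself force the nonwandering set to be trivial.
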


\begin{proof}  In this case, $\Omega(f)= \{z_0\}$ and we are done by the previous theorem.  
\end{proof}

 We finish this section by describing the possible $\omega$- (or $\alpha$)-limits of points with non accumulating past (or future) orbits.

\begin{lemma}\label{una}  If  $\omega(x)= \emptyset$, then $\alpha (x)$ contains at most one periodic orbit. 
 
\end{lemma}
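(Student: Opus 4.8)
The plan is to establish the stronger statement that if $\alpha(x)$ contains a periodic orbit $O$ at all, then in fact $\alpha(x)=O$; the assertion that $\alpha(x)$ contains at most one periodic orbit is then immediate. So I would assume $O\subseteq\alpha(x)$ is a periodic orbit, say through a periodic point $p$ of period $k$, and argue that every accumulation point of the backward orbit of $x$ must lie on $O$. The engine is the combination of Lemma \ref{epsilon} (which pins the shadowing orbit down to $x$ whenever the forward tail of the pseudo-orbit agrees with $(f^n(x))$) and the shadowing property applied to a pseudo-orbit that follows $x$ into the future but is forced to track $O$ forever in the past.

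First I would record two preliminary facts about the choice of the shadowing constant. Lemma \ref{epsilon} supplies a continuous positive $\epsilon_0$ with the pinning property; since shrinking $\epsilon_0$ only makes the $\epsilon$-shadowing condition more restrictive, any continuous positive $\epsilon\le\epsilon_0$ inherits the same conclusion (an $\epsilon$-shadowing orbit is automatically an $\epsilon_0$-shadowing orbit, hence equal to $x$). On the other hand $\omega(x)=\emptyset$ forces $f^n(x)\to\infty$, so the forward orbit eventually stays far from the compact set $O$. I can therefore take $\epsilon=\min(\epsilon_0,\rho)$, where $\rho$ is so small on a neighborhood of $O$ that for every $w\in O$ the ball $B(w,\epsilon(w))$ has radius less than half the distance from $O$ to any prescribed point off $O$. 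This $\epsilon$ is small near $O$ yet still pins $x$ through Lemma \ref{epsilon}.

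Next I would let $\delta$ be a shadowing function for this $\epsilon$, and use $O\subseteq\alpha(x)$ to choose $N$ large with $\|f^{-N}(x)-p\|<\delta(p)$. I then define the pseudo-orbit $(x_n)_{n\in\Z}$ by $x_n=f^n(x)$ for all $n\ge -N$, and for $n<-N$ cycle backward forever around $O$ starting at $p$, i.e. $x_{-N-1}=f^{-1}(p),\ x_{-N-2}=f^{-2}(p),\dots$, periodic of period $k$. Every consecutive pair is exact except at the single junction $n=-N-1$, where $\|f(x_{-N-1})-x_{-N}\|=\|p-f^{-N}(x)\|<\delta(p)$, so $(x_n)$ is a genuine $\delta$-pseudo-orbit. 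Shadowing produces an orbit $z$ that $\epsilon$-shadows it, and since the forward tail coincides with $f^n(x)$, the pinning property of Lemma \ref{epsilon} forces $z=x$.

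Finally I would read off the conclusion: for $n<-N$ the shadowing inequality gives $\|x_n-f^n(x)\|<\epsilon(x_n)$ with $x_n\in O$, so the entire backward tail $(f^n(x))_{n<-N}$ is trapped in the small $\epsilon$-neighborhood of $O$, which by construction excludes any prescribed point off $O$. Taking that point to be an arbitrary element of $\alpha(x)\setminus O$ yields a contradiction, so $\alpha(x)=O$ and in particular $\alpha(x)$ contains at most one periodic orbit. The one delicate step I expect is the simultaneous choice of $\epsilon$ — it must be small near $O$ and still satisfy Lemma \ref{epsilon} — which is exactly where $\omega(x)=\emptyset$ is used, since it keeps the forward orbit away from $O$ at large times; the pseudo-orbit assembly and the junction estimate are then routine.
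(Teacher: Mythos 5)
Your proof is correct, and it runs on the same engine as the paper's: Lemma \ref{epsilon} pins down the only orbit that can shadow a pseudo-orbit whose forward tail is $(f^n(x))$, and the spliced pseudo-orbit you build (periodic orbit backward, orbit of $x$ forward, with a single jump of size $<\delta(p)$ at the junction) is exactly the paper's construction. The difference is in how the contradiction is extracted. The paper assumes \emph{two} periodic orbits $z_1,z_2\subset\alpha(x)$, builds one spliced pseudo-orbit for each, notes that the orbit of $x$ must $\epsilon$-shadow both, and derives a contradiction because the $\epsilon$-tubes around the two periodic orbits are made pairwise disjoint. You build only \emph{one} spliced pseudo-orbit and instead choose $\epsilon$ small near $O$ relative to a prescribed point $q\notin O$; the shadowing conclusion then traps the entire backward tail of $x$ in a small neighborhood of $O$, so $q\notin\alpha(x)$. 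This buys a genuinely stronger conclusion: $\alpha(x)=O$ whenever $\alpha(x)$ contains a periodic orbit $O$ (in particular the backward tail is eventually bounded), which strengthens not only Lemma \ref{una} but also the dichotomy of Lemma \ref{alfa}. One remark: the step you flag as delicate is not actually delicate, and your appeal to $f^n(x)\to\infty$ there is superfluous. As you yourself observe, shrinking the function of Lemma \ref{epsilon} can never destroy the pinning property, so one may simply take $\epsilon=\min\{\epsilon_0,\,d(q,O)/3\}$, a continuous positive function, with no reference to where the forward orbit goes. The hypothesis $\omega(x)=\emptyset$ enters only to make Lemma \ref{epsilon} available in the first place, which is precisely how the paper uses it.
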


\begin{proof} By Lemma \ref{epsilon}, there exists $\epsilon: \R^ 2 \to \R$ a continuous positive map with the property that if $y\neq x$, then there exists 
$n\in \Z, n>0$ such that 
$||f^n (x)- f^n (y)|| > \epsilon (f^n (x))$.  In particular, if $(x_n)_{n\in \Z}$ is a pseudo-orbit such that $x_n = f^n (x)$ for all $n\geq n_0$, then the only possible orbit that
$\epsilon$-shadows $(x_n)_n$ is that of $x$.  Suppose that  $\alpha (x)$ contains two different periodic orbits $z_1$ and $z_2$.  Modifying the function $\epsilon$ if necessary, we may
assume that 
$B (f^n (z_i), \epsilon (f^n (z_i))) \cap B (f^m (z_j), \epsilon (f^m (z_j)))\neq \emptyset$ if and only if $i=j$ and $m= n$.
 Take $\delta: \R^ 2 \to \R$ a continuous positive map,
such that every $\delta$-pseudo-orbit is $\epsilon$-shadowed by an orbit and take $n_1, n_2 $ positive integers such that $f^{-n_1} (x) \in B(z_1, \delta (z_1))$ and 
 $f^{-n_2} (x) \in B(z_2, \delta (z_2))$.  Construct now two $\delta$-pseudo orbits $(x^1_n)_{n\in \Z}$ and  $(x^2_n)_{n\in \Z}$ as follows:  $x^1_n =f^{(-n_1-n)} (z_1)$ for all $n< -n_1$; 
 $x^1_n = f^n (x)$ for all $n\geq -n_1$;   $x^2_n =f^{(-n_2-n)} (z_2)$ for all $n< -n_2$; 
 $x^2_n = f^n (x)$ for all $n\geq -n_2$.  As noted above,  then the only possible orbit that $\epsilon$-shadows any of these pseudo-orbits is that of $x$.  However, if the orbit of
 $x$ $\epsilon$-shadows the pseudo-orbit $(x^1 _n)_n$, $f^n (x)\in B(x^1_n,\epsilon (x^1_n) )$ for all $n<-n_1$.
This clearly implies that the orbit of $x$ cannot $\epsilon$-shadow the pseudo-orbit $(x^2 _n)_n$, a contradiction. 
\end{proof}

 We recall the classic Utz's result that will be used in the next lemma:
 
 \begin{teo}\label{utz}  If $K$ is compact and supports a future-expansive homeomorphism, then it is finite.
  
 \end{teo}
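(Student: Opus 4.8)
The plan is to reduce the statement to a purely metric fact about surjective local contractions of compact spaces, and the route has three stages. Throughout, write $\lambda>1$ for a fixed expansion rate to be produced. The first stage is to replace the given metric by an \emph{adapted} one: from future-expansivity one obtains, via the classical metrization theorem for positively expansive maps (Reddy/Frink), a compatible metric $D$ on $K$ together with constants $\lambda>1$ and $\epsilon_0>0$ such that $D(x,y)\le \epsilon_0$ forces $D(f(x),f(y))\ge \lambda\, D(x,y)$. In other words, $f$ expands small $D$-distances uniformly. This is the technical heart of the argument, and I expect it to be the main obstacle: producing the adapted metric from the abstract definition of future-expansivity is exactly where all the work lies, and in the write-up I would either invoke this metrization result as a citable black box or reconstruct it by the standard averaging/sup construction over finite forward orbit segments.

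The second stage is where invertibility enters, and it is short. Since $f$ is a homeomorphism of the compact space $K$, the inverse $g:=f^{-1}$ is uniformly continuous for $D$, so I can choose $\eta\in(0,\epsilon_0]$ with the property that $D(x,y)<\eta$ implies $D(g(x),g(y))<\epsilon_0$. Applying the expansion estimate of the first stage to the pair $g(x),g(y)$ then yields $D(x,y)=D(f(g(x)),f(g(y)))\ge \lambda\, D(g(x),g(y))$, that is $D(g(x),g(y))\le \lambda^{-1} D(x,y)$ whenever $D(x,y)<\eta$. Thus $g$ is a uniform $\lambda^{-1}$-contraction at scale $\eta$, and crucially $g$ is surjective because $f$ is a homeomorphism.

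The third stage turns local contraction plus surjectivity into finiteness through a covering-number count. For $r>0$ let $N(r)$ denote the least number of $D$-balls of radius $r$ needed to cover $K$. If $r\le\eta$ and $B(x,r)$ is such a ball, the contraction estimate gives $g(B(x,r))\subseteq B(g(x),\lambda^{-1}r)$; since $g$ is onto, applying $g$ to an optimal $r$-cover of $K$ exhibits a cover of $K=g(K)$ by $N(r)$ balls of radius $\lambda^{-1}r$, so $N(\lambda^{-1}r)\le N(r)$. Iterating gives $N(\lambda^{-k}\eta)\le N(\eta)$ for every $k\ge 0$, so the covering numbers stay bounded as the scale tends to $0$. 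Finally I would invoke the elementary fact that a compact metric space with $\sup_{r>0}N(r)=C<\infty$ has at most $C$ points (a set of $C+1$ distinct points forces $N(r)\ge C+1$ for $r$ below half their minimal pairwise distance). Hence $K$ is finite, with $|K|\le N(\eta)$, completing the proof.
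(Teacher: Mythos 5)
Your proof is correct, but note that the paper itself contains no argument for this statement: Theorem \ref{utz} is recalled as a classical result (Utz; a proof can be found in the cited book \cite{aoki}), so the comparison is with the standard literature rather than with anything in the paper. Your three-stage argument is a complete and legitimate route modulo the black box in Stage 1: the adapted-metric theorem of Reddy (via Frink's metrization lemma) is a genuine citable result for positively expansive maps on compact metrizable spaces, and its proof does not use the finiteness statement, so there is no circularity. Stages 2 and 3 are correct as written: uniform continuity of $f^{-1}$ on the compact space gives the local $\lambda^{-1}$-contraction, surjectivity of $f^{-1}$ turns an optimal $r$-cover into an $N(r)$-ball cover at scale $\lambda^{-1}r$, and the resulting bound $N(\lambda^{-k}\eta)\le N(\eta)$ together with monotonicity of $N$ gives $\sup_{r>0}N(r)\le N(\eta)$, hence $|K|\le N(\eta)$. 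You also correctly isolate where invertibility enters; this is a good sanity check, since the theorem is false for non-invertible positively expansive maps (e.g. $z\mapsto z^2$ on the circle), and your Stage 2 is precisely the step that breaks in that example. The trade-off versus the usual self-contained proofs (Coven--Keane, Richeson--Wiseman, or the one in \cite{aoki}) is this: those avoid the adapted metric altogether, deducing finiteness from uniform expansivity plus a pigeonhole argument on backward orbits, at the cost of a more delicate combinatorial argument; your route outsources the analytic work to Reddy's metrization theorem, and in exchange the finiteness argument itself becomes a short, quantitative covering-number count.
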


 Recall that a map is future expansive if there exists $\epsilon >0$ such that $x\neq y$ implies there exists $n\geq 0$ such that $d(f^n (x), f^n (y)) > \epsilon$.
 
 \begin{lemma}\label{ctes}  Let $K$ be compact and invariant, and suppose there exists $\alpha>0$, $C>0$, such that $d(x,y)<\alpha$ implies that there exists $j>0$ such that $d(f^j (x), f^j (y))>C$.
 Then, $K$ is finite.
  
 \end{lemma}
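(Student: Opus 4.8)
The plan is to show that the hypothesis, which is only a \emph{local} separation condition (it asks for forward separation to distance $>C$ only for pairs that start within distance $\alpha$), already forces $f|_K$ to be future-expansive in the sense recalled just before Theorem \ref{utz}; the conclusion is then immediate from that theorem. I would first note that since $K$ is compact and invariant and $f$ is a homeomorphism of the plane, the restriction $f|_K:K\to K$ is itself a homeomorphism, so Theorem \ref{utz} will indeed be applicable once future-expansiveness has been established.

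The key observation is that an expansivity constant can be extracted directly from the hypothesis, because the definition of future-expansiveness allows the separation time $n$ to be $0$. Concretely, I would fix any $\epsilon$ with $0<\epsilon<\min\{\alpha,C\}$ and take $x,y\in K$ with $x\neq y$. If $d(x,y)\geq \alpha$, then already $d(f^0(x),f^0(y))=d(x,y)\geq\alpha>\epsilon$, so $n=0$ witnesses the separation. If instead $d(x,y)<\alpha$, the hypothesis provides some $j>0$ with $d(f^j(x),f^j(y))>C>\epsilon$, so $n=j$ witnesses the separation. In either case there is $n\geq 0$ with $d(f^n(x),f^n(y))>\epsilon$, which is exactly future-expansiveness of $f|_K$ with constant $\epsilon$.

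Having produced a future-expansive homeomorphism on the compact set $K$, I would then invoke Theorem \ref{utz} (Utz) to conclude that $K$ is finite, completing the proof.

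I do not expect a genuine obstacle here: the only points requiring a moment's care are (i) verifying that $f|_K$ is a homeomorphism, which uses invariance of $K$ together with the fact that $f$ is a global homeomorphism, and (ii) choosing $\epsilon$ strictly below both $\alpha$ and $C$, so that both the trivial case $d(x,y)\geq\alpha$ (handled at time $0$) and the hypothesis case $d(x,y)<\alpha$ (handled at some positive time) yield separation beyond $\epsilon$. The substance of the lemma is carried entirely by Theorem \ref{utz}; the present statement merely relaxes global future-expansiveness to its local form, and the relaxation is harmless precisely because distant pairs are already separated at the zeroth iterate.
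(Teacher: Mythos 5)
Your proof is correct and takes essentially the same route as the paper: both arguments note that pairs at distance at least $\alpha$ are separated already at time zero, so the local hypothesis upgrades to future expansiveness of $f|_K$, and then Theorem \ref{utz} finishes. Your choice of a constant $\epsilon$ strictly below $\min\{\alpha, C\}$ is in fact a slightly cleaner version of the paper's case split (which uses $\alpha$ or $C$ itself as the constant), since it sidesteps the harmless boundary case $d(x,y)=\alpha$.
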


 \begin{proof} Note that if $C> \alpha$, we get that $f|_K$ is $\alpha$-future expansive. If $C\leq \alpha$ we get that $f|_K$ is $C$-future expansive.  In any case,  $K$ must be finite
 by Theorem \ref{utz}.
 \end{proof}

\begin{lemma}\label{autz}  Let $K$ be a compact invariant set with expansivity constant $C$.  Suppose that for all $x\in K$ there exists a neighborhood $U$ of $x$, and $z\in U$ such that the orbit of $z$
$C/2$-shadows
any pseudo-orbit $(x_n)_{n\in \Z}$ such that $x_n = f^n (y), n<0$ for some $ y \in U$ and $x_n = f (z), n\geq 0$.  Then, $K$ is finite.
 
\end{lemma}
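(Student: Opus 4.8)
The plan is to deduce that, locally, $f|_K$ is future expansive and then invoke Lemma \ref{ctes}. First I would unpack the shadowing hypothesis. Fix $x\in K$ together with the neighborhood $U$ and the point $z\in U$ it provides, and consider a point $y\in U$ close enough to $z$ that the spliced sequence $x_n=f^n(y)$ for $n<0$ and $x_n=f^n(z)$ for $n\geq 0$ is a genuine pseudo-orbit (the only gluing error occurs at $n=-1$ and equals $d(y,z)$, so this holds once $y$ is near $z$). Since the orbit of $z$ $C/2$-shadows this pseudo-orbit and coincides with it identically for $n\geq 0$, the shadowing inequalities carry information only at negative times, where they read $d(f^n(z),f^n(y))<C/2$ for all $n<0$. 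Thus every $y$ near $z$ has a backward orbit that stays $C/2$-close to the backward orbit of $z$.

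Next I would convert this backward stability into forward expansiveness. If $y_1,y_2\in K$ both lie in a small enough neighborhood $V$ of $z$, the triangle inequality gives $d(f^n(y_1),f^n(y_2))<C$ for all $n<0$. If in addition $y_1\neq y_2$, then expansivity of $K$ with constant $C$ yields an integer $j$ with $d(f^j(y_1),f^j(y_2))>C$; the bound just obtained rules out $j<0$, and shrinking $V$ so that its diameter is below $C$ rules out $j=0$. Hence $j>0$, so inside $V$ any two distinct points of $K$ separate by more than $C$ in forward time. In other words, each distinguished point $z=z_x$ has a neighborhood $V_x$ on which $f|_K$ is future expansive with constant $C$.

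Finally I would globalize. The neighborhoods $V_x$ cover $K$, so by compactness finitely many of them suffice; letting $\alpha$ be a Lebesgue number of this finite subcover, any $p,q\in K$ with $d(p,q)<\alpha$ lie in a common $V_{x_i}$ and therefore, if $p\neq q$, satisfy $d(f^j(p),f^j(q))>C$ for some $j>0$. This is precisely the hypothesis of Lemma \ref{ctes} with the constants $\alpha$ and $C$, and that lemma then gives that $K$ is finite, as desired.

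The delicate point is the globalization rather than the separation estimates, which are routine. The backward stability I extract is anchored at the distinguished point $z$, not at $x$, so the neighborhoods $V_x$ are centered at the $z=z_x$. To guarantee that $\{V_x\}$ actually covers $K$ one needs the neighborhoods $U$ to be small enough that $z$ lies near $x$ (equivalently, that every $y\in U$ yields a valid pseudo-orbit of the prescribed form); under that standing smallness convention the cover does contain $K$ and the compactness step closes the argument. Making this smallness explicit, and checking it is consistent with how the hypothesis is used, is where I expect the real care to be required.
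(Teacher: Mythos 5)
Your argument is correct and follows essentially the same route as the paper: shadowing at negative times plus the triangle inequality keeps backward orbits of nearby points of $K$ within $C$ of each other, expansivity then forces separation at some $j\geq 0$ (your shrinking to rule out $j=0$ is slightly more careful than the paper, which passes $j\geq 0$ directly to Lemma \ref{ctes}), and a finite cover with a Lebesgue number feeds this into Lemma \ref{ctes}. The ``delicate point'' in your last paragraph dissolves under the intended reading of the hypothesis, which is the one used whenever the lemma is applied: the orbit of $z$ shadows the spliced sequence for \emph{every} $y\in U$, not only for $y$ near $z$, so the separation estimate can be anchored on $U$ itself --- a neighborhood of $x$ --- and the covering of $K$ is automatic; with that reading your proof coincides with the paper's.
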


\begin{proof} Take  a finite cover of $K$ with neighborhoods as in the hypothesis of the lemma. Let $\alpha >0$ be such that $d(x,y)<\alpha$, then $x$ and $y$ belong to one of the balls of
such cover. Then, if $d(x,y)<\alpha$, both pseudo-orbits $x_n = f^n (x), n<0$, and $x_n = f (z), n\geq 0$ and 
$y_n = f^n (y), n<0$, and $y_n = f (z), n\geq 0$ are $C/2$-shadowed by the orbit of $z$, and therefore  $d(f^{-n}(x), f^{-n}(y)) < C$ for all $n>0$.  By
expansivity, if $d(x,y)<\alpha$, then there exists $j\geq 0$ such that $d(f^{j}(x), f^{j}(y)) > C$.  We are done by the previous lemma.
\end{proof}

\begin{lemma}\label{alfa}  If $\omega(z) = \emptyset$, then $\alpha (z)$ is either unbounded or a single periodic orbit.
 
\end{lemma}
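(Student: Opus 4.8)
The plan is to prove the contrapositive: assuming $\alpha(z)$ is bounded, I would show it is a single periodic orbit. First I would set $K=\alpha(z)$ and record its structure. Since $\alpha$-limit sets are closed and invariant and $K$ is assumed bounded, $K$ is a compact invariant set, and it is nonempty because the contrapositive of Lemma \ref{inftyinfty} shows that $\omega(z)=\emptyset$ forces $\alpha(z)\neq\emptyset$. Using topological expansivity together with compactness, I would pass from the continuous expansivity function to a genuine constant: setting $C=\min_{w\in K}\epsilon(w)>0$ and using that $f^k(x)\in K$ whenever $x\in K$, one obtains that $f|_K$ is expansive with constant $C$ in the usual metric sense. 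The goal is then to prove that $K$ is finite, after which the conclusion is immediate: a finite invariant set is a finite disjoint union of periodic orbits, and Lemma \ref{una} guarantees that $\alpha(z)$ contains at most one periodic orbit, so $K$ must be exactly one periodic orbit.

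The heart of the argument is to verify the hypothesis of Lemma \ref{autz} for this $K$ and constant $C$. Fix $x\in K$. Because $x\in\alpha(z)$, there is a subsequence $f^{-n_k}(z)\to x$, and I would take the special point $z'=f^{-n_k}(z)$ to be one of these backward iterates, lying close to $x$. The decisive feature of $z'$ is that it sits on the orbit of $z$, so $\omega(z')=\emptyset$ and Lemma \ref{epsilon} applies: there is a rigidity function $\epsilon_{z'}$ such that the only orbit which $\epsilon_{z'}$-shadows a pseudo-orbit matching the forward orbit of $z'$ is that of $z'$ itself. I would then invoke the shadowing property with target precision $\min(C/2,\epsilon_{z'})$ to produce a corresponding $\delta$, and choose a neighborhood $U\ni x$ small enough (and containing $z'$) that every pseudo-orbit of the form $x_n=f^n(y)$ for $n<0$ with $y\in U$ and $x_n=f^n(z')$ for $n\geq 0$ is a $\delta$-pseudo-orbit; the only consistency condition to control is the single jump $\|y-z'\|$. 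The shadowing orbit then simultaneously $C/2$-shadows and $\epsilon_{z'}$-shadows the pseudo-orbit, so by Lemma \ref{epsilon} it must be the orbit of $z'$. This is precisely the hypothesis of Lemma \ref{autz}, which therefore forces $K$ to be finite, completing the reduction.

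The step I expect to be the main obstacle is the bookkeeping of constants in the previous paragraph, specifically the potential circular dependence between the choice of $z'$ and the neighborhood $U$: the function $\delta$ comes from a target precision that involves $\epsilon_{z'}$ and hence depends on $z'$, while admissibility of the pseudo-orbits requires $z'$ to lie inside a ball $U=B(x,r)$ whose radius is in turn controlled by that same $\delta$, reducing to a condition of the shape $\mathrm{dist}(z',x)<\tfrac12\,\delta_{z'}(x)$. I would resolve this by exploiting the freedom in selecting $z'$ arbitrarily far along the approximating sequence $f^{-n_k}(z)\to x$: for each candidate $z'$ one first fixes the precision $\min(C/2,\epsilon_{z'})$ and the resulting $\delta_{z'}$, and only afterwards shrinks the admissible radius and checks that a sufficiently late iterate sits inside it, the point being that the rigidity of the forward orbit of each such $z'$ (Lemma \ref{epsilon}) is exactly what forces the shadowing orbit to be $z'$ and makes the selection succeed. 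Confirming that a finite invariant set decomposes into periodic orbits and applying Lemma \ref{una} are then routine once finiteness is in hand.
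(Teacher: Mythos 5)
Your proposal is correct and follows essentially the same route as the paper: assume $\alpha(z)$ is bounded, use compactness of the invariant set $K$ to extract a genuine expansivity constant $C$, verify the hypothesis of Lemma \ref{autz} by combining the rigidity of Lemma \ref{epsilon} (applied along the orbit of $z$) with topological shadowing, and conclude via finiteness and Lemma \ref{una}. The only organizational difference is that the paper applies Lemma \ref{epsilon} once to $z$ itself, so that $\epsilon$ and the shadowing function $\delta$ are fixed \emph{before} the neighborhoods $U_x = B(x,\delta(x)/2)$ are chosen and a backward iterate of $z$ is then located inside $U_x$; this dissolves at the outset the circular dependence between the chosen iterate $z'$ and the neighborhood $U$ that your third paragraph works to untangle.
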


\begin{proof}  Suppose that  $\alpha (z)$ is bounded, so that it is a compact invariant set $K$. We know that $f|_K$ is expansive:
there exists $C>0$ such that $x\neq y, x, y, \in K$ implies there exists $n\in \Z$ such that $d(f^n (x), f^n (y)) > C$. We claim that $K$ verifies the hypothesis of the previous lemma.

Take $\epsilon: \R^2\to \R$ as in Lemma \ref{epsilon}, and modify it if necessary such that $2\epsilon (x) < C$ for all $x\in K$.

Take $ \delta: \R^2\to \R$ as in the definition of topological shadowing, and for all $x\in K$, take $U_x= B(x, \delta(x)/2)$. Take $n_0$ such
that $f^{-n_0} (z)\in U=U_x$, for some  $x\in K$. By the choice of $\epsilon: \R^2\to \R$, the orbit of $z$ $C/2$-shadows
any pseudo-orbit such that $x_n = f^n (y), n<0$ for some $y\in U$, $x_n = f^{-n_0+n}(z), n \geq 0$.  This proves the claim, and therefore $K$ is finite.

Now, by Lemma \ref{una}
 $K$ must be a single 
periodic orbit. 
\end{proof}

\section{Time one maps}

 We recall the classical Poincar\'e-Bendixon's theorem on $\mathbb{S}^2$:
 
 \begin{teo}\label{pb}  Let $(f_t)_{t\in\R}$ be a flow defined by a $C^1 $-vector field on the sphere $\mathbb{S}^2$.  Then, the $\alpha$-limit and the $\omega$-limit of any orbit is either a singularity, a periodic orbit, 
 or a cycle of connections.
  
 \end{teo}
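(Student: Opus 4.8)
The plan is to transport the classical planar Poincar\'e--Bendixson argument to the sphere, the point being that $\mathbb{S}^2$, like $\R^2$, satisfies the Jordan curve theorem: every simple closed curve separates $\mathbb{S}^2$ into two open disks. Since $\mathbb{S}^2$ is compact, for every orbit the sets $\alpha(x)$ and $\omega(x)$ are automatically nonempty, compact, connected and invariant under the flow, so there is nothing to prove about existence; the content is the structural trichotomy. I will carry out the argument for $\omega(x)$, the case of $\alpha(x)$ being identical after reversing time.

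First I would set up the local transversal machinery. Since the field is $C^1$, at any regular (non-singular) point $p$ the flow-box theorem provides a short arc $\Sigma$ through $p$ transverse to the vector field, across which trajectories cross in a fixed direction and for which the first-return map is defined and continuous where it makes sense. The crucial ingredient is the monotone-crossing lemma: if a single trajectory meets $\Sigma$ at successive times at points $p_1, p_2, p_3, \dots$, then these points are monotone along $\Sigma$. This is where the Jordan curve theorem enters: the trajectory arc from $p_1$ to $p_2$ together with the sub-arc of $\Sigma$ between $p_1$ and $p_2$ bounds a simple closed curve, hence separates $\mathbb{S}^2$ into two disks, and because the field is transverse to $\Sigma$ the flow may cross this curve only from one side, trapping all later crossings on one side of $p_2$. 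From this one deduces the key consequence: $\omega(x)$ meets any such transversal $\Sigma$ in at most one point, since the successive crossings of the orbit of $x$ with $\Sigma$ form a monotone sequence and therefore converge to a single limit on $\Sigma$.

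With this in hand I would run the case analysis. If $\omega(x)$ contains a regular point $q$ lying on a periodic orbit $\gamma$, then taking a transversal through $q$ and using that $\omega(x)\cap\Sigma$ is a single point forces $\omega(x)=\gamma$, giving the periodic-orbit alternative. If $\omega(x)$ contains a regular point $q$ but no periodic orbit, I examine $\omega(q)\subset\omega(x)$, which is nonempty by compactness: were $\omega(q)$ to contain a regular point $r$, a transversal through $r$ would be crossed by the orbit of $q$ infinitely often while meeting $\omega(x)$ only at $r$, forcing the orbit of $q$ to be periodic, a contradiction; hence $\omega(q)$, and likewise $\alpha(q)$, consist only of singularities. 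Thus every regular orbit contained in $\omega(x)$ is bi-asymptotic to singularities. Assuming, as in the classical statement, that the singularities of the field are finite in number (equivalently isolated), these singularities together with the connecting regular orbits form, by the connectedness of $\omega(x)$, a cycle of connections; and if $\omega(x)$ contains no regular point at all, connectedness together with isolation forces it to be a single singularity.

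The main obstacle is the monotone-crossing lemma on $\mathbb{S}^2$: unlike in $\R^2$, a simple closed curve on the sphere bounds two disks with no canonical \emph{inside}, so I must argue carefully, using only the local transversality of the field to $\Sigma$, that the flow crosses the separating curve in a single direction and hence that no later return can jump to the far side of the previous one. Once this lemma is secured, the remaining deductions are the standard ones, transcribed essentially verbatim from the planar setting.
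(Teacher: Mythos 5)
The paper offers no proof of this statement at all: Theorem \ref{pb} is \emph{recalled} as the classical Poincar\'e--Bendixson theorem on $\mathbb{S}^2$ and is used as a black box in the section on time-one maps. So there is nothing in the paper to match your argument against; what you have written is the standard textbook proof, and as a sketch it is essentially sound. The ingredients you isolate are the right ones: compactness of $\mathbb{S}^2$ gives nonempty, compact, connected, invariant limit sets; the flow-box theorem gives local transversals; the monotone-crossing lemma gives that $\omega(x)$ meets a transversal in at most one point; and the trichotomy follows by the usual case analysis. The ``obstacle'' you worry about on the sphere is not really one: the argument never needs a canonical inside, only that the Jordan curve (orbit arc plus sub-arc of $\Sigma$) separates $\mathbb{S}^2$ into two components and that, by transversality and continuity, the flow crosses the sub-arc of $\Sigma$ into a single one of them, which traps the forward orbit and yields monotonicity. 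One point you flag deserves emphasis: the trichotomy as stated (with ``cycle of connections'') genuinely requires the singularities --- at least those in the limit set --- to be isolated, hence finitely many on the compact sphere; a $C^1$ field can have a continuum of singularities, and then the conclusion fails as stated. The paper's statement silently carries this classical hypothesis, but this is harmless for its application: there the flow's time-one map is topologically expansive, which (as the paper notes later) forces $\fix(f)$, and hence the singular set, to be discrete, so the finiteness needed by your proof is automatic in the only context where the theorem is invoked.
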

 
 Recall that a {\it connection} between two singularities $x_1$ and $x_2$ (not necesarilly different) is an orbit $x$ such that $\alpha (x) = x_1$ and $\omega (x) = x_2$ (or $\alpha (x) = 
 x_2$ and $\omega (x) = x_1$).\\
 
Throughout this section, we let $f:\R^2\to \R^2$ be a $\ta$ homeomorphism that is the time one map of a flow.  The orbit of a point $x$ for the flow will be noted $ {\mathcal O}(x)$.  Note
that the flow extends to the sphere $\mathbb{S}^2$ with a singularity at infinity.  We say that a connection between two singularities $x_1$ and $x_2$ is {\it finite} if $x_i\neq \infty$, $i=1,2$.

Our first goal is to prove:

\begin{teo}\label{teo1} $f$ is conjugate to a homothety.
 
\end{teo}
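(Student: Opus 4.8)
The plan is to combine the structural results of the previous sections with the Poincar\'e--Bendixon theorem on the sphere $\mathbb{S}^2$, exploiting the fact that $f$ is the time one map of a $C^1$ flow. I would first show that the flow has exactly one finite singularity $z_0$ and that it is a fixed point of $f$. Since a singularity of the flow is fixed by the time one map, and $f$ is topologically expansive, there can be at most one finite singularity: two distinct ones would contradict expansivity exactly as in the one-dimensional lemma, because a connection orbit or a continuum of nearby slow orbits would fail to separate under iteration. The goal of this first step is to reduce to the situation $\fix(f)=\{z_0\}$ with no other recurrence.

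The heart of the argument is to rule out all the ``intermediate'' $\alpha$- and $\omega$-limit behavior and force $\Omega(f)=\{z_0\}$, so that the previous theorem applies. Here I would use Theorem \ref{pb}: on $\mathbb{S}^2$ every $\alpha$- or $\omega$-limit is a singularity, a periodic orbit, or a cycle of connections. A periodic orbit $\gamma$ of the flow would be an invariant topological circle; by the Jordan curve theorem it bounds a disk, and $f$ restricted to a neighborhood could not be topologically expansive (a periodic orbit supports equicontinuous-type behavior incompatible with expansivity, and it also typically violates shadowing, just as a rigid rotation or translation does). Similarly a cycle of connections among finite singularities is excluded because there is only one finite singularity. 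Thus the only limit sets available are the finite singularity $z_0$ and the singularity at infinity. This means every orbit has $\omega(x)\in\{\{z_0\},\emptyset\}$ and $\alpha(x)\in\{\{z_0\},\emptyset\}$, where $\emptyset$ corresponds to escaping to the singularity at infinity.

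Next I would show that $z_0$ cannot simultaneously be both a forward and backward limit for nontrivial orbits, which is where Lemmas \ref{inftyinfty}, \ref{stable}, \ref{x0x0} and \ref{alfa} of the previous section do the work. Lemma \ref{inftyinfty} forbids orbits with $\alpha(x)=\omega(x)=\emptyset$ (both limits at infinity). Lemma \ref{x0x0} combined with Lyapunov stability (Lemma \ref{stable}) forbids a nontrivial homoclinic orbit with $\alpha(x)=\omega(x)=\{z_0\}$. So every orbit other than $z_0$ must have exactly one of its limits equal to $z_0$ and the other equal to $\emptyset$. Because the flow is a genuine flow (no orientation-reversing behavior, as a time one map of a flow is isotopic to the identity and hence orientation preserving), this forces a coherent picture: $z_0$ is either a global attractor or a global repeller. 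Concretely, I would argue that the set of points with $\omega=\{z_0\}$ and the set with $\alpha=\{z_0\}$ are both open and closed in $\R^2\setminus\{z_0\}$ by Lyapunov stability applied to $f$ and to $f^{-1}$, hence one of them is all of $\R^2\setminus\{z_0\}$ by connectedness. This gives $\Omega(f)=\{z_0\}$.

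Finally, with $\Omega(f)=\{z_0\}$ established, the preceding theorem says $f$ is conjugate to a homothety or a reverse homothety; since $f$ is the time one map of a flow it is orientation preserving, ruling out the reverse homothety, so $f$ is conjugate to a homothety. The main obstacle I anticipate is the rigorous exclusion of periodic orbits and connection cycles of the flow using only the topological expansivity and shadowing hypotheses: translating ``supports a periodic orbit'' or ``supports a cycle of connections'' into a concrete violation of expansivity or of the shadowing property requires building an explicit pseudo-orbit (as in Lemmas \ref{x0x0} and \ref{una}) that cannot be shadowed, and verifying the continuity and positivity of the auxiliary functions $\epsilon$ and $\delta$ along these invariant continua. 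The connectedness/openness argument in the last step, sorting out which global behavior holds, is the second delicate point and is where I would be most careful to invoke Lyapunov stability in both time directions.
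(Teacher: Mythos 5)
There are two genuine gaps in your argument, both in its first half. The most serious is your opening step, the claim that the flow has exactly one finite singularity, justified ``exactly as in the one-dimensional lemma.'' That lemma's proof rests on Bryant's theorem applied to the compact invariant interval bounded by two fixed points; in the plane, two fixed points bound no compact invariant set, and two fixed points by themselves are perfectly compatible with topological expansivity (the expansivity function $\epsilon$ need only be chosen smaller than their mutual distance at those points). Uniqueness of the fixed point is a \emph{consequence} of Theorem \ref{teo1}, not an available starting point. Moreover, your endgame genuinely needs it: the partition of $\R^2\setminus\{z_0\}$ into the attracted set and the repelled set, both open by Lemma \ref{stable} applied to $f$ and $f^{-1}$, followed by connectedness, breaks down if there is a second fixed point $z_1$, since $z_1$ lies in neither set. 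The paper avoids this circularity entirely: it never claims uniqueness, but instead shows (via Lemma \ref{inftyinfty} and the absence of connections) that some orbit has nonempty $\omega$- or $\alpha$-limit, that this limit is a sink (resp.\ source) whose basin is open, invariant and conjugate to a homothety, and then proves the basin is all of $\R^2$ by a boundary argument: a point of the basin boundary would have both limit sets empty (its $\omega$-limit cannot be a sink, its $\alpha$-limit cannot be a source without creating a forbidden connection), contradicting Lemma \ref{inftyinfty}.

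The second gap is the exclusion of cycles of connections. Even granting a unique finite singularity, ``only one singularity'' does not rule out a homoclinic loop (a connection from $z_0$ to itself --- the paper's definition explicitly allows the two singularities to coincide), and you never address cycles passing through the singularity at infinity. Without excluding both, Poincar\'e--Bendixson does not give your claim that every limit set is $\{z_0\}$ or empty. These exclusions are exactly the content of the paper's Lemmas \ref{exp} and \ref{sc}: finite connections and periodic orbits are killed by the flow-orbit trick (a nearby point on the \emph{same} flow orbit has all its iterates under the time-one map uniformly close to those of $x$, violating expansivity --- this is the precise version of your ``equicontinuous-type behavior'' remark), while a cycle through infinity is killed by a pseudo-orbit that enters $z_0$ along the incoming connection and leaves along the outgoing one, which by Lemma \ref{epsilon} could only be shadowed by the incoming orbit itself, an impossibility. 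You correctly flagged these exclusions as the main obstacle, but the reasons you sketch for them do not work; once they are proved and the uniqueness claim is either dropped (restructuring the endgame as the paper does) or deferred until after global attraction is established, the remainder of your argument --- Lemma \ref{inftyinfty}, stability, connectedness, and orientation-preservation ruling out the reverse homothety --- is sound.
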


\begin{lemma}\label{exp}  There are no periodic orbits or finite connections.
 
\end{lemma}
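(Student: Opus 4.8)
The plan is to reduce each case to the non-existence of expansive homeomorphisms on the relevant compact one-dimensional object. The starting observation is that the restriction of $f$ to any compact invariant set $K$ is a metric expansive homeomorphism: since the topological expansivity function $\epsilon$ is continuous and strictly positive it attains a minimum $C>0$ on $K$, and because $K$ is invariant the separating iterate lands again in $K$, so $x\neq y$ in $K$ forces $\|f^k(x)-f^k(y)\|>C$ for some $k\in\Z$. This is the same fact already used in the proof of Lemma~\ref{alfa}. Moreover, on a compact metric space expansivity is independent of the compatible metric, hence invariant under topological conjugacy; this lets me replace $f|_K$ by any convenient topological model of the same dynamics.

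First I would treat periodic orbits. Suppose the flow has a genuine closed orbit $\gamma$ of positive period $T$. Then $\gamma$ is a compact invariant Jordan curve, so $f|_\gamma$ is expansive by the remark above. On the other hand the flow restricted to $\gamma$ is fixed-point free, so every orbit is all of $\gamma$ with common minimal period $T$; fixing a base point $p_0$ and setting $\psi(f_s(p_0))=s \bmod T$ produces a homeomorphism $\R/T\Z\to\gamma$ that conjugates the flow on $\gamma$ to the constant-speed rotation flow. Consequently $f|_\gamma=f_1|_\gamma$ is conjugate to the rotation by $1$ on $\R/T\Z$, which is an isometry and therefore not expansive. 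This contradicts expansivity of $f|_\gamma$, so no periodic orbit exists.

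Next I would treat finite connections. Let $x$ be a finite connection with $\alpha(x)=\{x_1\}$, $\omega(x)=\{x_2\}$ and $x_1,x_2\neq\infty$; here $x_1,x_2\in\fix(f)$ since flow singularities are fixed by $f$, and $f_t(x)\to x_i$ forces $f^n(x)\to x_i$, so these are the discrete limit sets as well. The closure $\Gamma=\mathcal O(x)\cup\{x_1,x_2\}$ is compact and flow-invariant, hence $f$-invariant, and the orbit parametrization $\bar c(t)=f_t(x)$ extends to a continuous bijection from the two-point compactification of $\R$ onto $\Gamma$, hence a homeomorphism. If $x_1\neq x_2$ then $\Gamma$ is an arc, so $f|_\Gamma$ is an expansive homeomorphism of a compact interval, contradicting Bryant's theorem \cite{bry}. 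If $x_1=x_2=:p$ then the two ends are identified to $p$, $\Gamma$ is a Jordan curve, and under the induced homeomorphism $f|_\Gamma$ is conjugate to the one-point compactification of the translation $t\mapsto t+1$ of $\R$, with $p$ the point at infinity. Such a map is not expansive: writing $D(\eta)=\sup_{m\in\R}d(m,m+\eta)$ in the compactified metric one has $D(\eta)\to 0$ as $\eta\to 0$, so no uniform constant can separate all pairs $\{t,t+\eta\}$; this again contradicts expansivity of $f|_\Gamma$.

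The main obstacle, and where care is needed, is the circle case. One must produce an honest topological model of $f|_\Gamma$ and be sure the non-expansivity of that model transfers back. For the periodic orbit this hinges on the orbits of a fixed-point-free circle flow sharing a common period, which upgrades the obvious orbit equivalence to a genuine conjugacy with a rotation; for the homoclinic loop it requires the estimate $D(\eta)\to 0$ showing the translation-type circle map is not expansive. Both then rest on expansivity being a metric-independent, conjugacy-invariant property on compact spaces, which is exactly what makes the reduction legitimate. As an alternative handling both circle subcases at once, one may simply invoke the classical fact that $S^1$ admits no expansive homeomorphism, in the same spirit as the appeals to Bryant's and Utz's theorems (Theorem~\ref{utz}) elsewhere in the paper.
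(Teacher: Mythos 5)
Your proof is correct, but it follows a genuinely different route from the paper's. The paper argues directly from the definition of topological expansivity: given a finite connection $x$, with $m=\min\{\epsilon(z):z\in\overline{\mathcal O(x)}\}$, it takes a point $y$ on the same flow orbit with $d(x,y)$ small; continuity of $f$ keeps $\|f^k(x)-f^k(y)\|<m$ for $|k|\le N$, while for $|k|>N$ both iterates lie in the $m/2$-balls around the limiting singularities, so the two orbits stay within $m$ of each other for all $k\in\Z$, contradicting expansivity (the periodic case is declared analogous: a small time-shift along a closed orbit never separates from the original orbit). You instead restrict $f$ to the compact invariant set ($\gamma$ or $\Gamma$), note that topological expansivity gives metric expansivity there --- the same observation the paper itself uses in Lemma \ref{alfa} --- and then quote non-existence results for expansive homeomorphisms of compact one-dimensional spaces: Bryant's theorem for the arc (exactly as the paper does in Section 2 for Theorem \ref{DimOneCase}), and the rotation and compactified-translation models (or the classical fact that $S^1$ supports no expansive homeomorphism) for the circle cases. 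Both arguments are sound. The paper's is self-contained and uniform, exploiting only the flow structure; yours is more modular, reuses machinery already present in the paper, and has the merit of treating explicitly the homoclinic-loop subcase $x_1=x_2$, which the paper's estimate covers only implicitly via the union of the two balls. The points you flag as delicate are indeed the ones that make your reduction legitimate: the common period upgrading orbit equivalence to a genuine conjugacy with a rotation, the estimate $D(\eta)\to 0$ for the compactified translation, and the metric-independence of expansivity on compact spaces, which lets non-expansivity of the models transfer back to $f|_\gamma$ and $f|_\Gamma$.
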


\begin{proof}  We claim that any of those phenomena violate the topological expansivity.  Indeed, take a continuous and strictly positive function $\epsilon: \R^2\to \R$ such that for all 
$x, y \in \R^2, x\neq y$ there exists $k\in \Z$ 
  satisfying $||f^k (x)-f^k (y)|| > \epsilon (f^k (x))$.  Suppose that there is a finite connection.  Then, there exists $x, x_1, x_2\in \R^2$ such that $\alpha (x) = x_1$ and $\omega (x) = x_2$.
  Take $N$ large enough such that $|k|>N$ implies $f^k (x) \in B (x_1, m/2) \cup B (x_2, m/2) $, with $m= \min \{\epsilon (z): z \in \overline{{\mathcal O} (x)}\}$.  Note that
  if $y\in {\mathcal O}(x)$, enlarging $N$ if necessary we may assume that $|k|>N$ implies also  $f^k (y) \in B (x_1, m/2) \cup B (x_2, m/2) $. Now, take $\delta>0$
  such that $d(x,y)<\delta$ implies $d(f^n (x), f ^n (y))<m$ for all $|n|\leq N$.  Then, for all $k\in \Z$, $||f^k (x)-f^k (y)|| < m$, violating the expansivity condition.
  The proof for a periodic orbit is analogous and left to the reader. 
\end{proof}

The  previous lemma implies that if there is a cycle of connections containing $\infty$, there exists $x$ such that $\alpha(x) = \infty , \omega(x) = x_0$ with $x_0$ a singularity, 
and there exists $y$ such that $\alpha(y) = x_0 , \omega(y) = \infty$.

\begin{lemma}\label{sc}  There are no cycles of connections.
 
\end{lemma}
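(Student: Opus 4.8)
The plan is to argue by contradiction, reducing a hypothetical cycle to the single finite singularity it must pass through and then playing Lemma \ref{stable} against the escape behavior of a second orbit. By Lemma \ref{exp} there are no finite connections, so any cycle of connections must contain the singularity at $\infty$; moreover, since every connection has $\infty$ as one of its endpoints, the singularities along the cycle alternate between $\infty$ and finite ones. Starting the cycle at $\infty$, this produces (as recorded in the remark following Lemma \ref{exp}) a finite singularity $x_0$ together with a flow orbit $x$ satisfying $\alpha(x)=\infty$, $\omega(x)=x_0$ and a flow orbit $y$ satisfying $\alpha(y)=x_0$, $\omega(y)=\infty$, for the \emph{same} $x_0$.

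Next I would translate these flow limits into limits for the time one map $f=f_1$. Because the $f$-orbit of a point is the sequence of flow positions at integer times, $\omega(x)=x_0$ forces $f^n(x)\to x_0$ as $n\to+\infty$, while $\alpha(x)=\infty$ forces $f^n(x)\to\infty$ as $n\to-\infty$; hence for the map we have $\omega(x)=\{x_0\}$ and $\alpha(x)=\emptyset$. Symmetrically, for $y$ we obtain $\alpha(y)=\{x_0\}$ and $\omega(y)=\emptyset$. Note that $x_0\in\fix(f)$ since it is a singularity of the flow, so the lemmas of the previous section apply with $z_0=x_0$.

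The key step is to apply Lemma \ref{stable} to $x$: since $\alpha(x)=\emptyset$ and $\omega(x)=\{x_0\}$, the fixed point $x_0$ is Lyapunov stable. I would then derive the contradiction from $y$. Fix $\epsilon>0$ and let $\delta>0$ be the Lyapunov constant, so that $d(w,x_0)<\delta$ implies $f^n(w)\in B(x_0,\epsilon)$ for all $n\geq 0$. Because $\alpha(y)=\{x_0\}$ for the map, there is $N$ with $f^{-N}(y)\in B(x_0,\delta)$; Lyapunov stability then confines the entire forward orbit of $f^{-N}(y)$, and therefore the forward orbit of $y$, to $B(x_0,\epsilon)$. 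Thus the forward orbit of $y$ is bounded, so $\omega(y)\neq\emptyset$, contradicting $\omega(y)=\emptyset$.

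The main obstacle is conceptual rather than computational: one must ensure that the incoming connection $\infty\to x_0$ and the outgoing connection $x_0\to\infty$ meet at the \emph{same} finite singularity, which is exactly what the alternating structure of the cycle (forced by the absence of finite connections) guarantees. Once that is secured, the stable behavior of $x_0$ for $x$ is incompatible with $x_0$ being an unstable endpoint for $y$, and the contradiction is immediate. A reader might instead prefer the symmetric variant that applies the backward-time analogue of Lemma \ref{stable} to $y$; either route closes the argument.
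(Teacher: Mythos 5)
Your proof is correct, but it closes the argument by a different mechanism than the paper does. Both proofs start identically: using Lemma \ref{exp} to rule out finite connections, both extract from a hypothetical cycle through $\infty$ a single finite singularity $x_0$ together with orbits $x$ (with $\alpha(x)=\infty$, $\omega(x)=x_0$) and $y$ (with $\alpha(y)=x_0$, $\omega(y)=\infty$); your explicit justification via the alternating structure of the cycle is in fact more careful than the paper's, which simply asserts this in the remark preceding the lemma. The paper then stays at the level of pseudo-orbits: invoking Lemma \ref{epsilon} backward in time, it chooses $\epsilon$ so that the only orbit which can $\epsilon$-past-shadow the orbit of $x$ is that of $x$ itself, and it builds an explicit $\delta$-pseudo-orbit that follows $f^n(x)$ into $x_0$, jumps across $x_0$, and then continues along the orbit of $y$ out to infinity; no true orbit can shadow it, since the only candidate is the orbit of $x$, which stays near $x_0$ forever. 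You instead invoke Lemma \ref{stable} (whose hypotheses $\alpha_f(x)=\emptyset$, $\omega_f(x)=\{x_0\}$ you correctly verify for the time one map) to conclude $x_0$ is Lyapunov stable, and then note that $\alpha_f(y)=\{x_0\}$ forces the forward orbit of $y$ into a bounded neighborhood of $x_0$, so $\omega_f(y)\neq\emptyset$, contradicting $\omega(y)=\infty$. Your route reuses an already-established lemma and avoids any explicit pseudo-orbit surgery in this proof, making it shorter and more conceptual; the cost is hidden rather than removed, since Lemma \ref{stable} is itself proved by exactly the kind of pseudo-orbit construction the paper carries out inline here. Both arguments are sound and rest ultimately on the same two pillars: Lemma \ref{epsilon} and the topological shadowing property.
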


\begin{proof}  We have already seen that there are no finite connections (Lemma \ref{exp}).  We need to discard a cycle of connections containing $\infty$.  By the remark 
preceeding this lemma,  there exists $x$ such that $\alpha(x) = \infty , \omega(x) = x_0$ with $x_0$ a singularity, 
and there exists $y$ such that $\alpha(y) = x_0 , \omega(y) = \infty$.   Now, by Lemma 
\ref{epsilon}, one may choose $\epsilon: \R^ 2 \to \R$ a continuous positive map with the property that if $z\neq x$, then there exists $n\in \Z, n<0$ such that 
$||f^n (x)- f^n (z)|| > \epsilon (f^n (x))$ (the only orbit that $\epsilon$-past-shadows the orbit of $x$ is
the orbit of $x$ itself).  
 There exists $k>0$ such that 
$f^k (y)\notin B (x_0, \epsilon(x_0))$ because $\omega(y) = \infty$. Take $\delta: \R^ 2 \to \R$ a continuous positive map,
such that every $\delta$-pseudo-orbit is $\epsilon$-shadowed by an orbit.  We will finish the proof constructing a $\delta$-pseudo-orbit $(x_n)_{n\in \Z}$ that is not $\epsilon$-shadowed 
by any orbit. Let $n_0$ be such that
for all $n\geq n_0$ $f^n(x)\in B(x_0, \delta(x_0))$ and such that for all $n\leq n_0$ $f^n(y)\in B(x_0, \delta(x_0))$.  Define $x_n = f^n (x)$ for all $n\leq n_0$, $x_{n_0+1}=x_0$, $
x_{n_0+2}= f^{n_0} (y)$, $x_n = f(x_{n-1})$ 
for all $n> n_0+2$.  Note that $(x_n)_{n\in\Z}$ is a $\delta$-pseudo-orbit that is not $\epsilon$-shadowed, because the choice of $\epsilon$ implies that the only candidate is the orbit of $x$, 
but $f^k (y)\notin B (x_0, \epsilon(x_0))$.
\end{proof}

It follows by the Poincar\'e-Bendixon's theorem that both the $\alpha$ and $\omega$-limit of any point are either empty or consist of a single fixed point.  Moreover, if 
$\alpha (x) = \emptyset$, then $\omega(x) = x_0$, $x_0\in \fix(f)$ (and vice-versa).

\begin{lemma}  If there exists an  orbit $x$ such that $\omega(x) \neq \emptyset$, then $\omega(x)$ is Lyapunov stable.
 
\end{lemma}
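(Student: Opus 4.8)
The plan is to reduce the statement to Lemma \ref{stable}. Since $\omega(x)\neq\emptyset$, the structural analysis already established for this flow (Poincar\'e--Bendixson, together with Lemmas \ref{exp} and \ref{sc}) shows that $\omega(x)$ consists of a single finite fixed point; write $\omega(x)=\{x_0\}$ with $x_0\in\fix(f)$. To conclude that $x_0$ is Lyapunov stable it suffices, by Lemma \ref{stable}, to produce a point $z$ with $\alpha(z)=\emptyset$ and $\omega(z)=\{x_0\}$. I claim that $z=x$ works, where I take $x$ to be a genuine (non-fixed) orbit accumulating on $x_0$; if $x$ were the fixed point $x_0$ itself the hypothesis would be vacuous for the purpose of approaching orbits, so the substantive case is a non-trivial orbit, and I assume $x\neq x_0$.

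First I would pin down $\alpha(x)$. By the same dichotomy, $\alpha(x)$ is either empty or a single fixed point. Suppose, for contradiction, that $\alpha(x)=\{x_1\}$ for some finite $x_1\in\fix(f)$ (allowing $x_1=x_0$). Then the flow orbit ${\mathcal O}(x)$ has $\alpha({\mathcal O}(x))=x_1$ and $\omega({\mathcal O}(x))=x_0$ with both endpoints finite, i.e. it is a finite connection (a homoclinic one when $x_1=x_0$). This is impossible by Lemma \ref{exp}. Therefore the only remaining possibility is $\alpha(x)=\emptyset$.

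Having established $\alpha(x)=\emptyset$ and $\omega(x)=\{x_0\}$, I would apply Lemma \ref{stable} with $z=x$ and $z_0=x_0$, which yields at once that $x_0=\omega(x)$ is Lyapunov stable, exactly the desired conclusion.

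The only step requiring care is the exclusion of a finite $\alpha$-limit, and this is precisely the content of the no-finite-connection statement of Lemma \ref{exp}; I therefore expect no genuine obstacle. The argument is essentially a bookkeeping of the orbit dichotomies this flow enjoys, closed off by invoking the Lyapunov-stability criterion of Lemma \ref{stable}. The subtle point to watch is ensuring that the homoclinic case $x_1=x_0$ is correctly subsumed under the definition of finite connection used in Lemma \ref{exp}, since the connections there are allowed to have coinciding endpoints.
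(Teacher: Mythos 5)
Your proposal is correct and follows essentially the same route as the paper: the paper invokes the remark after Lemma \ref{sc} (Poincar\'e--Bendixson plus the absence of connections) to conclude $\omega(x)=\{x_0\}$ and $\alpha(x)=\emptyset$, and then applies Lemma \ref{stable}, exactly as you do. Your only addition is to spell out explicitly the exclusion of a finite $\alpha$-limit via Lemma \ref{exp} (including the homoclinic case, which the paper's definition of connection indeed covers since the singularities are ``not necessarily different''), a step the paper leaves implicit in that remark.
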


\begin{proof}  By the previous remark, we may assume that $\omega(x) = x_0$ (and therefore $\alpha (x) = \emptyset$).  The result now  follows from Lemma \ref{stable}.
\end{proof}

\begin{lemma}  If there exists an  orbit $x$ such that $\omega(x) \neq \emptyset$, then $\omega(x)$ is a sink.
 
\end{lemma}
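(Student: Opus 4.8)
The plan is to upgrade the Lyapunov stability of $x_0=\omega(x)$, already established in the previous lemma, to asymptotic stability, which is exactly what it means for $x_0$ to be a sink. By the remark following Lemma \ref{sc}, we may assume $\omega(x)=x_0$ with $x_0\in\fix(f)$, and the two facts I would use are: (i) for every $y\in\R^2$ the set $\omega(y)$ is either empty or a single fixed point (the structural consequence of Lemmas \ref{exp} and \ref{sc} recorded just before the previous lemma), and (ii) the fixed points of $f$ are isolated. Fact (ii) is immediate from topological expansivity: if $p\neq q$ are both fixed then $f^k(p)=p$ and $f^k(q)=q$ for every $k$, so $\|f^k(p)-f^k(q)\|=\|p-q\|$ and $\epsilon(f^k(p))=\epsilon(p)$ for all $k$; the expansivity requirement that $\|f^k(p)-f^k(q)\|>\epsilon(f^k(p))$ for some $k$ therefore forces $\|p-q\|>\epsilon(p)$. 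In particular, taking $p=x_0$, no other fixed point lies in $B(x_0,\epsilon(x_0))$.

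Next I would exploit Lyapunov stability quantitatively. Fix $\rho>0$ small enough that $\overline{B(x_0,\rho)}\subset B(x_0,\epsilon(x_0))$, and let $\delta>0$ be the radius supplied by Lyapunov stability, so that $y\in B(x_0,\delta)$ implies $f^n(y)\in B(x_0,\rho)$ for all $n\geq 0$. For such a $y$ the forward orbit is bounded, hence $\omega(y)$ is nonempty and contained in $\overline{B(x_0,\rho)}$; in particular it cannot be the singularity at infinity of the extended flow. By fact (i), $\omega(y)$ is a single fixed point, and since the only fixed point in $\overline{B(x_0,\rho)}\subset B(x_0,\epsilon(x_0))$ is $x_0$ by fact (ii), we conclude $\omega(y)=x_0$, i.e. $f^n(y)\to x_0$.

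Combining the two conclusions, $x_0$ is Lyapunov stable and every orbit starting in $B(x_0,\delta)$ converges to $x_0$; that is, $x_0$ is asymptotically stable, so $\omega(x)=x_0$ is a sink. The content here is not a difficult estimate but the correct assembly of ingredients: the real point to get right is that the $\omega$-limit of a nearby orbit is forced to be a genuine, finite fixed point (ruling out emptiness via boundedness from Lyapunov stability, and ruling out the point at infinity for the same reason), after which isolation of fixed points pins it to $x_0$. I expect this assembly, rather than any single computation, to be where care is needed.
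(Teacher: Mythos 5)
Your proof is correct and follows essentially the same route as the paper: isolation of fixed points via topological expansivity, plus the Lyapunov stability from the previous lemma to trap nearby forward orbits in a bounded set, plus the Poincar\'e--Bendixson structure (nonempty $\omega$-limits are single fixed points) to force convergence to $x_0$. You merely spell out quantitatively (the $\rho$, $\delta$ balls and the explicit expansivity computation for fixed points) what the paper leaves terse.
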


\begin{proof} Note that topological expansivity implies that $\fix (f)$ is a discrete set.  Take a neighborhood $U$ of $\omega(x)$ such that $U\cap \fix(f) = \omega(x)$.   By the previous 
lemma, there exists a neighborhood $V\subset U$ of $\omega(x)$ such that the 
$\omega$ limit of any orbit in $V$ cannot be empty, and therefore must be $\omega (x)$.  The result follows.
\end{proof}

\begin{rk} We have the analogous statement: If there exists an  orbit $x$ such that $\alpha(x) \neq \emptyset$, then $\alpha(x)$ is a source.
 
\end{rk}

\begin{lemma}  If there exists an  orbit $x$ such that $\omega(x) \neq \emptyset$, then $f$ is topologically conjugate to a homothety. 
 
\end{lemma}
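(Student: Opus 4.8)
The plan is to upgrade the sink $x_0=\omega(x)$ into a \emph{global} attractor and then invoke the corollary of Section 3, which states that if $\lim_{n\to+\infty}f^n(z)=x_0$ for every $z\in\R^2$ then $f$ is conjugate to a homothety or a reverse homothety. Since $f$ is the time one map of a flow it is isotopic to the identity, hence orientation preserving, so the reverse homothety is ruled out and $f$ is conjugate to a homothety. Thus the whole problem reduces to proving that $x_0$ attracts every point of the plane. By the two results immediately preceding this lemma we may already assume $\omega(x)=\{x_0\}$ is a Lyapunov stable sink with $\alpha(x)=\emptyset$, and that every orbit with nonempty $\omega$-limit (resp. $\alpha$-limit) has that limit a sink (resp. a source).

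The engine of the argument is a partition-by-connectedness step. By the dichotomy following Lemma \ref{sc}, for every non-fixed point exactly one of $\alpha(x),\omega(x)$ is empty and the other is a single finite fixed point: both cannot be empty by Lemma \ref{inftyinfty}, and both cannot be finite since that would be a finite connection, excluded by Lemma \ref{exp}. Writing $B^{+}$ for the set of points with nonempty $\omega$-limit and $B^{-}$ for the set with nonempty $\alpha$-limit, this gives the disjoint decomposition $\R^2\setminus\fix(f)=B^{+}\sqcup B^{-}$. Each sink is Lyapunov stable by Lemma \ref{stable}, so $B^{+}$, a union of basins of sinks, is open; applying the same lemmas to $f^{-1}$ (the time one map of the reversed flow, again Topologically Anosov) shows $B^{-}$ is open as well. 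Since topological expansivity makes $\fix(f)$ discrete, $\R^2\setminus\fix(f)$ is connected and cannot be split into two nonempty disjoint open sets; as $x\in B^{+}$ we conclude $B^{-}=\emptyset$. Running the same connectedness argument inside $B^{+}=\R^2\setminus\fix(f)$, now written as the disjoint union of the open basins of the distinct sinks, forces a single sink, namely $x_0$.

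It remains to prove $\fix(f)=\{x_0\}$, since any other fixed point $p$ would survive in $\R^2\setminus B^{+}$ and obstruct global attraction. Because $B^{-}=\emptyset$ and $x_0$ is the only sink, such a $p$ would be a finite singularity that is neither a sink nor a source: no orbit has $p$ as an $\alpha$- or $\omega$-limit, so $p$ has no separatrix; it has no homoclinic loop, since such a loop would be a finite connection excluded by Lemma \ref{exp}; and it has no surrounding periodic orbit, again excluded. I expect the main obstacle precisely here: I would invoke the Bendixson sectorial classification of an isolated singularity of a $C^1$ planar flow to conclude that, lacking both elliptic sectors and separatrices, $p$ must be a node, focus or center, hence a sink, a source or the center of a family of periodic orbits, each of which is impossible. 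This local phase-portrait step, together with the verification that $f^{-1}$ inherits the Topologically Anosov hypotheses so that $B^{-}$ is genuinely open, is the delicate part; the rest is the soft partition argument. Once $\fix(f)=\{x_0\}$ is established we have $B^{+}=\R^2\setminus\{x_0\}$, so $x_0$ is a global attractor and the corollary of Section 3 completes the proof.
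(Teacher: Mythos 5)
Your route is genuinely different from the paper's, and it has one real gap. The soft part of your argument is fine and uses the same ingredients as the paper (the sink/source lemmas, Lemma \ref{exp}, Lemma \ref{inftyinfty}): the decomposition $\R^2\setminus \fix(f)=B^{+}\sqcup B^{-}$, openness of both pieces, and connectedness do force $B^{-}=\emptyset$ and a unique sink. The gap is exactly where you expected it, but it is worse than ``delicate'': the Bendixson sectorial classification is not available as a black box at $C^1$ regularity. Sector decompositions of an isolated singularity require analyticity (or a finiteness hypothesis on separatrices); a merely $C^1$, or even $C^\infty$, vector field can have singularities with infinitely many separatrices and no finite sector structure. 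What your argument actually needs is the dichotomy: an isolated singularity of a planar flow with no periodic orbits nearby must be the $\alpha$- or $\omega$-limit of some nonstationary orbit. That statement is true, but it needs a Poincar\'e--Bendixson transit-time argument (orbit segments crossing a small disc around $p$ have transit times tending to infinity as they approach $p$, so a limit of exit points has its backward orbit trapped in the disc, forcing its $\alpha$-limit to be $\{p\}$); as written, your citation does not close the step.

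The paper avoids this entire issue by running the argument on the boundary of the basin rather than on the set of fixed points. It takes $U$ to be the open invariant basin of the sink $x_0$ (on which $f$ is already conjugate to a homothety) and shows $\partial U=\emptyset$: if $x\in\partial U$ had $\omega(x)\neq\emptyset$, then $\omega(x)$ would be a sink by the preceding lemma, so $x$ would lie in an open basin meeting $U$, forcing that sink to be $x_0$ and $x\in U$, a contradiction; if $\alpha(x)\neq\emptyset$, then $\alpha(x)$ would be a source, so points of $U$ near the backward orbit of $x$ would have that source as $\alpha$-limit and $x_0$ as $\omega$-limit, i.e.\ a finite connection, excluded by Lemma \ref{exp}. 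Both limit sets empty contradicts Lemma \ref{inftyinfty}, so $U=\R^2$. Note that this treats a \emph{fixed} boundary point on the same footing as any other ($\omega(p)=\{p\}\neq\emptyset$, so $p$ would be a sink whose basin is open and disjoint from the basin of $x_0$), so no local phase-portrait analysis is ever needed; the same observation, or a direct shadowing argument (a pseudo-orbit sitting at $p$ for negative times and then following a nearby orbit to $x_0$ cannot be shadowed), would also close your step 6 without Bendixson theory. Two smaller points: your flagged verification that $f^{-1}$ is again $\ta$ is indeed needed for the openness of $B^{-}$ (the paper uses the analogous statement for sources implicitly as well), and your orientation remark ruling out the reverse homothety is correct and necessary to get the statement as claimed.
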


\begin{proof}  By the previous lemma, there exists an open and invariant set $U$ such that $f|_U$ is conjugate to a homothety.  Let us show that $U= \R ^2$.  Otherwise, take 
$x\in \partial U$. Note that  the 
$\omega$ limit of $x$ must be empty, otherwise it would be a sink by the previous lemma, contradicting that it belongs to $\partial U$.  Also, the $\alpha$ limit of $x$ must be empty.
Otherwise, it would be a source and therefore the $\alpha$ limit of some point in $U$, contradicting that there are no connections.
 This contradicts Lemma \ref{inftyinfty}.
\end{proof}

\begin{rk}  Analogously,  if there exists an  orbit $x$ such that $\alpha(x) \neq \emptyset$, then $f$ is topologically conjugate to a homothety. 

\end{rk}
%
%
%
%

We are now ready to finish the proof of Theorem \ref{teo1}:

\begin{proof} Note that by Lemma \ref{inftyinfty} and the remark following Lemma \ref{sc}, there exists an  orbit $x$ such that $\omega(x) \neq \emptyset$, or there exists an 
orbit $x$ such that $\alpha(x) \neq \emptyset $.  The result now follows from the previous lemma and remark.
\end{proof}

\section{Attractor at infinity}

Throughout this section we assume that infinity is a topological attractor; that is, there exists an open simply connected proper subset $U$ with compact closure, such that 
$U \subset \inte(\overline {f(U)})$, and such that 
$ \cup_{n\geq 0} f^n (U)= \R ^2$.  We denote $K = \cap_{n\leq 0} f^n (U) $.  Note that $K$ is compact, invariant, connected and non-empty. 

\begin{lemma}\label{cuenca} There exists $\epsilon: \R^ 2 \to \R$ a continuous positive map  such that if $y\neq x$, $x,y\notin K$ there exists 
$ n>0$ such that 
$||f^n (x)- f^n (y)|| > \epsilon (f^n (x))$.  In particular,  if $(x_n)_{n\in \Z}$ is a pseudo-orbit such that $x_n = f^n (x)$ for all $n\geq n_0$, $x\notin K$, then the only possible orbit 
that $\epsilon$-shadows $(x_n)_n$ is that of $x$.
 
\end{lemma}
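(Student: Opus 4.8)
The plan is to prove a uniform forward-separation property for points outside $K$ by first showing that every such point escapes to infinity, and then adapting the construction of Lemma \ref{epsilon} so that it applies to all escaping orbits at once along a single collar of infinity. Since $U\subset\inte(\overline{f(U)})$, taking closures gives $\overline U\subset\overline{f(U)}$, so $W:=\R^2\setminus\overline U$ satisfies $f(W)=\R^2\setminus\overline{f(U)}\subset W$ and is a forward invariant neighborhood of infinity with $\cap_{n\geq 0}f^n(W)=\emptyset$ (because $f^n(W)=\R^2\setminus\overline{f^n(U)}$ and $\cup_{n\geq 0}f^n(U)=\R^2$); hence any orbit entering $W$ escapes to infinity and has empty $\omega$-limit. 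To see that $x\notin K$ forces the forward orbit into $W$, note that the set $\Lambda$ of points whose forward orbit stays in $\overline U$ is forward invariant, and for $x\in\Lambda$ the compact set $\omega(x)$ is invariant, hence contained in $K$ (every compact invariant set lies in $K$: by the exhaustion and compactness it is eventually, so by invariance always, inside $U$). Thus $f^n(x)\in U$ for large $n$ and invariance of $K$ gives $x\in K$, so $\Lambda=K$ and $\omega(x)=\emptyset$ for every $x\notin K$.

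With escape in hand I would make the construction uniform. Set $F:=W\setminus f(W)$, a fundamental domain with $W=\bigsqcup_{n\geq 0}f^n(F)$ and $f$ shifting each collar $f^n(F)$ onto the next. Since $U$ is simply connected, $F$ is a topological annulus, and telescoping the gluings along $f$ realizes a neighborhood of infinity as a half-cylinder $\mathbb{S}^1\times[0,\infty)$ on which $f$ acts as the unit shift $T(\theta,s)=(\theta,s+1)$; equivalently, there is a homeomorphism $h$ onto its image, defined on this collar, with $hf=Th$. This is the uniform analogue of the map built around a single orbit in Lemma \ref{epsilon}. Choosing $\tilde\epsilon$ on the model that decays to $0$ with the level $s$ and pulling it back through $h$ (then extending to a continuous positive map on $\R^2$) defines the required $\epsilon$.

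For distinct $x,y\notin K$, suppose $\|f^n(x)-f^n(y)\|\leq\epsilon(f^n(x))$ for all $n>0$. As $x\notin K$ we have $f^n(x)\to\infty$, hence $f^n(y)\to\infty$ and both tails enter the collar; since $T$ preserves the cross-section, $\|h(f^n(x))-h(f^n(y))\|$ is constant in $n$, while $\tilde\epsilon$ vanishes along the level, forcing this constant to be $0$, so $x=y$ by injectivity of $h$ — exactly the mechanism of Lemma \ref{epsilon}. The ``in particular'' clause then follows: if an orbit $z$ $\epsilon$-shadows a pseudo-orbit with $x_n=f^n(x)$ for $n\geq n_0$ and $x\notin K$, then $z\in K$ is impossible (its forward orbit is bounded while $f^n(x)\to\infty$) and $z\notin K$ forces $z=x$ by the forward separation just proved.

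The main obstacle, relative to Lemma \ref{epsilon}, is precisely this uniformity: a single $\epsilon$ must work for every pair outside $K$ simultaneously, which compels the construction to take place on an entire collar of infinity rather than around one orbit. The delicate step is therefore the end trivialization — producing $h$ conjugating $f$ on a neighborhood of infinity to the unit shift on $\mathbb{S}^1\times[0,\infty)$ — for which the simple connectivity of $U$ (so that $F$ is an annulus) and the fundamental-domain decomposition $W=\bigsqcup_{n\geq 0}f^n(F)$ are essential; this collar trivialization is weaker than, and a step toward, the full classification of this section. The minor point-set care that an orbit leaving $K$ genuinely enters the open set $W$ is dispatched by the identity $\Lambda=K$ established above.
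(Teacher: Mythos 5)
Your proposal follows essentially the same route as the paper: the paper's entire proof of Lemma \ref{cuenca} is the one-line assertion that $f|_{\R^2\setminus K}$ is conjugate to $x\mapsto\lambda x$, $\lambda>1$, on $\R^2\setminus\{(0,0)\}$, and your conjugacy $hf=Th$ with the unit shift on a collar of infinity is precisely that statement (read through the exponential), restricted to a neighborhood of the end; in both cases $\epsilon$ is then transported from the model by the mechanism of Lemma \ref{epsilon}. Your added material --- the forward invariance of $W=\R^2\setminus\overline U$, the argument that every orbit outside $K$ escapes to infinity, and the explicit treatment of the ``in particular'' clause --- is correct and supplies detail the paper leaves implicit.

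However, your justification of the trivialization itself does not work as written, and that trivialization is the only real content of the lemma. From simple connectivity of $U$ you cannot conclude that $F=W\setminus f(W)=\overline{f(U)}\setminus\overline U$ is a topological annulus: $W$ need not even be connected ($\overline U$ can separate the plane even when $U$ is simply connected, e.g.\ the closure of a slit annulus), and $\partial U$, $\partial (f(U))$ can be wild continua, so the pieces $f^n(F)$ do not glue along circles and ``telescoping'' is not an argument. A correct route is to pass to the unbounded component of $W$, fill in its bounded complementary components to obtain an invariant open annulus $A$ about infinity with $f(A)\subset A$ and $\bigcap_{n\geq 0}f^n(A)=\emptyset$, and then invoke the classical Ker\'kj\'art\'o-type theorem that such an embedding is conjugate near the end to the standard model --- which is exactly the fact the paper's one-liner takes for granted. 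So your proof is no less rigorous than the paper's, but it does not supply the missing step either: presented as a derivation it contains a false intermediate claim, and it should instead cite (or genuinely prove) the same classical conjugacy. A final technical point: $\epsilon$ cannot literally be $\tilde\epsilon\circ h$; as in Lemma \ref{epsilon} one must choose $\epsilon(z)$ so that $B(z,\epsilon(z))\subset h^{-1}\bigl(B(h(z),\tilde\epsilon(h(z)))\bigr)$, so that closeness in the plane forces closeness in the model --- your separation argument silently uses this inclusion, so it should be stated.
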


\begin{proof}  Just note that $f|_{\R^2 \backslash K}$
is conjugate to $x\mapsto \lambda x$, $\lambda >1$ on $\R^2 \backslash (0,0)$. 
\end{proof}

\begin{lemma} $K= \{x_0\}$
 
\end{lemma}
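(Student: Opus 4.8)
The plan is to prove that $K$ is \emph{finite}; since $K$ is compact, connected and nonempty, a finite $K$ must be a single point $x_0$, which is fixed because $K$ is invariant. To obtain finiteness I would invoke Lemma \ref{autz}, so the whole argument reduces to checking its hypotheses for $K$. First, since $f$ is topologically expansive and $K$ is compact and invariant, $f|_K$ is (metrically) expansive with some constant $C>0$, exactly as in the proof of Lemma \ref{alfa}. It then remains to produce, for each $x\in K$, a neighborhood $U$ and a point $z\in U$ whose orbit $C/2$-shadows every pseudo-orbit that follows the backward orbit of an arbitrary $y\in U$ for $n<0$ and the forward orbit of $z$ for $n\ge 0$.

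The mechanism for producing such a $z$ is Lemma \ref{cuenca} together with the shadowing property. Fix the map $\epsilon$ given by Lemma \ref{cuenca}; shrinking it we may assume $\epsilon\le C/2$ on $K$, since making $\epsilon$ smaller only strengthens its conclusion. Let $\delta$ be associated to this $\epsilon$ by topological shadowing. Given $x\in K$, take $U=B(x,r)$ with $r$ small enough that the only jump of the pseudo-orbit below, of size $\|y-z\|$, stays under $\delta$, and choose $z\in U\setminus K$. The concatenation $x_n=f^n(y)$ for $n<0$, $x_n=f^n(z)$ for $n\ge 0$ is then a genuine $\delta$-pseudo-orbit, so it is $\epsilon$-shadowed by some orbit $w$. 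Since the forward half of this pseudo-orbit is exactly the orbit of $z$ and $z\notin K$, the ``in particular'' clause of Lemma \ref{cuenca} forces $w$ to be the orbit of $z$. Hence the orbit of $z$ $\epsilon$-shadows, and \emph{a fortiori} $C/2$-shadows, the pseudo-orbit, which is precisely the hypothesis of Lemma \ref{autz}. Therefore $K$ is finite, and the conclusion $K=\{x_0\}$ follows.

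The step I expect to be the main obstacle is the requirement $z\in U\setminus K$ for \emph{every} $x\in K$: if some $x$ lay in the interior of $K$, then no admissible $z$ outside $K$ would be close enough to keep the concatenation a $\delta$-pseudo-orbit. Thus the crux is to show that $\inte(K)=\emptyset$. This should follow from $K$ being the repeller dual to the attractor at infinity, since the conjugacy of $f|_{\R^2\setminus K}$ to an expanding homothety (Lemma \ref{cuenca}) forces every point of $\R^2\setminus K$ to escape, so no open invariant set can be trapped inside $K$. As corroboration, note that for $x\notin K$ one has $\omega(x)=\emptyset$ while $\alpha(x)\subset K$ is bounded, so Lemma \ref{alfa} shows each such $\alpha(x)$ is a single periodic orbit; once $\inte(K)=\emptyset$, every point of $K$ is approached from the exterior and $K$ is thus the closure of these backward accumulation sets, which, being connected and finite by the argument above, reduces to the single fixed point $x_0$.
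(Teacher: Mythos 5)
Your core argument is exactly the paper's proof: the expansivity constant $C$ of $f|_K$, the map $\epsilon$ of Lemma \ref{cuenca} shrunk below $C/2$, the shadowing function $\delta$, the concatenated pseudo-orbit (backward orbit of $y$, then forward orbit of $z\in U\setminus K$), the ``in particular'' clause of Lemma \ref{cuenca} forcing the shadowing orbit to be that of $z$, and finally Lemma \ref{autz} plus connectedness. Moreover, the obstacle you flag is real, and the paper passes over it in silence: the scheme needs a point $z\notin K$ in an arbitrarily small (of size controlled by $\delta$) neighborhood of \emph{each} $x\in K$, which fails if $x\in\inte(K)$.

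The genuine gap is your resolution of that obstacle. The assertion that $\inte(K)=\emptyset$ follows from the conjugacy of $f|_{\R^2\setminus K}$ with an expanding homothety is a non sequitur: a statement about the dynamics outside $K$ imposes no constraint on the interior of $K$. Concretely, the homeomorphism $f(re^{i\theta})=\max(r,2r-1)\,e^{i\theta}$ (identity on $\overline{\D}$, radially expanding outside), with $U=B(0,3)$, gives $K=\overline{\D}$: every point of $\R^2\setminus K$ escapes to infinity and $f|_{\R^2\setminus K}$ is conjugate to $x\mapsto 2x$ on $\R^2\setminus\{0\}$, yet $\inte(K)\neq\emptyset$. (This $f$ is of course not $\ta$, but it satisfies every fact your deduction invokes, so that deduction cannot prove the claim; any proof of $\inte(K)=\emptyset$ must reuse expansivity or shadowing.) The repair is cheap: run your verification on the compact invariant set $\partial K$ instead of $K$. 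It inherits the expansivity constant $C$, and every $x\in\partial K$ has points of $\R^2\setminus K$ arbitrarily close, so the required $z$ always exists; Lemma \ref{autz} then gives that $\partial K$ is finite. Since the boundary of a nonempty bounded open planar set is never finite (a finite set does not disconnect the plane, yet the boundary of a component of $\inte(K)$ would have to separate that component from the unbounded exterior), finiteness of $\partial K$ forces $\inte(K)=\emptyset$; hence $K=\partial K$ is finite and connected, a single fixed point. Note also that your closing ``corroboration'' does not fill the hole: it both presupposes $\inte(K)=\emptyset$ and appeals to ``the argument above'', so it is circular.
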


\begin{proof} We know that $f|_K$ is expansive:
there exists $C>0$ such that $x\neq y, x, y, \in K$ implies there exists $n\in \Z$ such that $d(f^n (x), f^n (y)) > C$. 
Take $\epsilon: \R^2\to \R$ as in the previous Lemma, and modify it if necessary such that $2\epsilon (x) < C$ for all $x\in K$.

Take $ \delta: \R^2\to \R$ as in the definition of topological shadowing, and for all $x\in K$ take $U= B(x, \delta(x)/2)$.  Note that the orbit of any $z\in U\backslash K$, 
$C/2$- shadows any pseudo-orbit such that $x_n = f^n (y), n<0$, $x_n = f^n (z), n\geq 0$ for some $y \in U$. So, Lemma \ref{autz} implies that  $K$ must be
finite
and as it is connected, a single point.
\end{proof}

As a corollary, we obtain:

\begin{teo}  If there is an attractor at infinity, then $f$ is conjugate to a homothety or  a reverse homothety.
 
\end{teo}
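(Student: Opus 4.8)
The plan is to leverage the immediately preceding lemma, which establishes that $K = \{x_0\}$ is a single point, together with the structural consequences of the ``attractor at infinity'' hypothesis. Since $K = \cap_{n\leq 0} f^n(U)$ reduces to the single point $x_0$, and since $\cup_{n\geq 0} f^n(U) = \R^2$, the point $x_0$ is an invariant fixed point toward which all backward orbits collapse. I would first verify that $x_0$ is indeed fixed: because $f(K) = K$ by invariance and $K = \{x_0\}$, we immediately get $f(x_0) = x_0$.

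The key step is to identify the global dynamics as those of a repeller at $x_0$ (equivalently, an attractor at infinity in forward time). I would argue that $x_0$ is a global repeller, meaning $\lim_{n\to -\infty} f^n(z) = x_0$ for all $z \in \R^2$, or dually that every forward orbit escapes to infinity. This should follow from the two defining properties: the condition $U \subset \inte(\overline{f(U)})$ forces $U$ to expand under $f$, so forward iterates push points outward and $\cup_{n\geq 0} f^n(U) = \R^2$ guarantees every point is eventually swept out; meanwhile $K = \{x_0\}$ means the only point whose entire backward orbit stays in the compact region $U$ is $x_0$ itself. Together these say that for any $z \neq x_0$, the backward orbit converges to $x_0$ and the forward orbit tends to infinity. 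This places us exactly in the setting dual to the earlier theorem where $z_0$ was a global attractor.

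With $x_0$ established as a global repeller that is asymptotically stable (in backward time), I would then invoke Ker\'ekj\'art\'o's theorem, exactly as was done in the proof of the theorem for the case $\Omega(f) = \{z_0\}$ earlier in the paper. That theorem classifies homeomorphisms of $\R^2$ having a global attracting (or, by reversing time, repelling) fixed point that is asymptotically stable: they are conjugate to a homothety when orientation preserving, and to a reverse homothety when orientation reversing. Since $f^{-1}$ has $x_0$ as a global attractor, Ker\'ekj\'art\'o applies to $f^{-1}$, and conjugating back gives the classification for $f$.

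The main obstacle I anticipate is cleanly deducing that $x_0$ is a \emph{global} repeller rather than merely a local one, and in particular ruling out the possibility of other recurrence away from $x_0$. The hypothesis only directly controls the behavior of the single nested set $U$; I must translate ``$\cup_{n\geq 0} f^n(U) = \R^2$ and $\cap_{n\leq 0} f^n(U) = \{x_0\}$'' into a statement about \emph{every} individual orbit. The natural route is to use Lemma \ref{cuenca}, which tells us that $f|_{\R^2 \setminus K}$ is conjugate to the linear expansion $x \mapsto \lambda x$ with $\lambda > 1$ on $\R^2 \setminus \{(0,0)\}$; this conjugacy already exhibits the repelling dynamics explicitly on the complement of $x_0$, so the global repeller structure is essentially handed to us, and the remaining work is just to assemble these pieces and confirm the orientation bookkeeping before applying Ker\'ekj\'art\'o's theorem.
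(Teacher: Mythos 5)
Your proposal is correct and takes essentially the same route as the paper: the paper states this theorem as an immediate corollary of the preceding lemma ($K=\{x_0\}$), with the repelling structure on $\R^2\setminus K$ coming from Lemma \ref{cuenca} and the final classification coming from Ker\'ekj\'art\'o's theorem exactly as in the earlier case $\Omega(f)=\{z_0\}$. Your write-up simply makes explicit the argument the paper leaves implicit.
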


%
%

\section{There are no bounded basins}

Suppose there exists an open connected, simply connected  proper subset $U$ with compact closure such that 
$U \subset \inte(\overline {f(U)})$.  We have seen in the previous section that if $f$ is $\ta$ and
$ \cup_{n\geq 0} f^n (U)= \R ^2$, then $f$ is conjugate to homothety. We show in this section that $ \cup_{n\geq 0} f^n (U)$ must be unbounded.

\begin{lemma}  Let $D\subset \R ^2$ be an open topological disc with compact closure, and $f:\overline{D} \to \overline{D} $ an $\alpha$- expansive homeomorphism.  Then, there exists $x\in D$ such that $d(f^n (x), \partial D)\leq \alpha$ for all $n\in \Z$.
 
\end{lemma}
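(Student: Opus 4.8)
The plan is to show that the maximal $f$-invariant subset of the closed collar $A:=\{z\in\overline D: d(z,\partial D)\le\alpha\}$ meets the open disc $D$. Write $G:=\{z\in\overline D: d(z,\partial D)>\alpha\}$ and $B:=\bigcap_{n\in\Z} f^n(A)=\{x: f^n(x)\in A \text{ for all } n\in\Z\}$, a compact $f$-invariant set. If $G=\emptyset$ the statement is trivial, so assume $G\neq\emptyset$. Since $\partial D$ is $f$-invariant we have $\partial D\subseteq B$, and since $B\subseteq\overline D=D\cup\partial D$ with $D$ and $\partial D$ disjoint, it suffices to prove $B\neq\partial D$: any point of $B\setminus\partial D$ lies in $D$ and satisfies $d(f^n(x),\partial D)\le\alpha$ for all $n$.

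First I would record a compactness reduction. For $N\ge 0$ set $D_N:=\{x: f^n(x)\in A \text{ for } |n|\le N\}$; these are compact, nested, with $\bigcap_N D_N=B$. If $B=\partial D$, then $\max_{x\in D_N} d(x,\partial D)\to 0$: otherwise a subsequence of near-deepest points of the $D_N$ converges to a point of $B$ at positive distance from $\partial D$, which would already finish the proof. Equivalently, under the assumption $B=\partial D$, every point at a fixed positive depth leaves the collar, i.e.\ enters $G$ in forward or backward time, within a uniformly bounded number of iterates. The remaining task is to contradict this.

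Next I would bring in the planar structure together with expansivity. Introduce the forward- and backward-trapped sets $A^{+}:=\{x: f^n(x)\notin G \text{ for all } n\ge 0\}$ and $A^{-}:=\{x: f^n(x)\notin G \text{ for all } n\le 0\}$, which are closed, contain $\partial D$, satisfy $f(A^{+})\subseteq A^{+}$, $f^{-1}(A^{-})\subseteq A^{-}$, and $A^{+}\cap A^{-}=B$. The useful remark is that if $z\in A^{+}$ then $\omega(z)$ is a nonempty compact invariant subset of $A$, so the full orbit of every point of $\omega(z)$ lies in $A$, whence $\omega(z)\subseteq B$; thus an interior point whose forward orbit accumulates off $\partial D$ produces a point of $B\cap D$ at once. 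One is therefore reduced to the case where all accumulation of collar-trapped orbits lies on $\partial D$, i.e.\ trapped interior orbits merely \emph{slide along} the boundary continuum toward it. On this configuration I would run a fixed-point/index argument (Brouwer together with a Cartwright--Littlewood type statement for the invariant continuum $\partial D$ and the complementary disc), or a future-expansivity argument in the spirit of Lemma \ref{ctes}, to show that the collar dynamics cannot collapse uniformly onto $\partial D$ while respecting the expansivity constant $\alpha$, forcing a full orbit to remain in $A$ at positive distance from $\partial D$.

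The main obstacle is precisely this last step. Because $\partial D$ may be a wild invariant continuum rather than a circle, the cheap restriction argument (an expansive homeomorphism of $\partial D$) gives no contradiction, and finite-window trapped orbits can legitimately drift arbitrarily close to $\partial D$ and separate from one another by sliding along the boundary, all compatibly with $\alpha$-expansivity. The crux is thus to combine the planar topology of the collar $A$, which sits between $\partial D$ and the level set $\{d=\alpha\}$, with the uniform constant $\alpha$ in order to exclude the degenerate possibility $B=\partial D$; everything preceding this reduction is routine.
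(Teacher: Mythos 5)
Your proposal is not a proof: it is a chain of (correct) reductions followed by an explicit admission that the decisive step is missing. Setting $A=\{z\in\overline D: d(z,\partial D)\le\alpha\}$ and $B=\bigcap_{n\in\Z}f^n(A)$, reducing the lemma to ``$B\neq\partial D$'', and observing that $\omega$-limits of collar-trapped orbits lie in $B$ is all routine bookkeeping; the mathematical content of the lemma is precisely the exclusion of the degenerate case $B=\partial D$, and for that you only gesture at possible tools (a Brouwer/Cartwright--Littlewood index argument, or a future-expansivity argument in the spirit of Lemma \ref{ctes}) without developing either. Neither gesture obviously works: an index argument needs a fixed-point or recurrence structure you have not produced, and a Utz-type argument (Lemma \ref{ctes}) needs a compact invariant set on which you already have uniform forward separation, which is exactly what is unavailable on the collar. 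So there is a genuine gap, and it sits at the only place where a gap matters.

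The idea you are missing is a one-point collapse. The paper forms $X=\overline D/\partial D$ with quotient map $\pi$: since $X$ is the one-point compactification of $D\cong\R^2$, it is homeomorphic to $\mathbb{S}^2$ \emph{no matter how wild the continuum $\partial D$ is} --- this is what disposes of your ``wild boundary'' obstacle, because collapsing erases the internal topology of $\partial D$ entirely. The map $f$ descends to a homeomorphism $\hat f$ of $X$, and by the classical theorem (see \cite{lew}) the sphere carries no expansive homeomorphism; so for the constant $\alpha$ there is a pair of distinct points of $X$ whose $\hat f$-orbits never $\alpha$-separate. A pair of distinct points of $\pi(D)$ does separate, because $f$ is $\alpha$-expansive on $\overline D$ and the quotient distance between interior points is (up to the shortcut through the collapsed point) the planar distance; hence the non-separating pair must consist of $\pi(\partial D)$ and some $\pi(x)$ with $x\in D$, which says exactly that $d(f^n(x),\partial D)\le\alpha$ for all $n\in\Z$. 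I will add, to be fair to you, that the configuration you worried about --- interior orbits that separate only while both points slide along $\partial D$ --- resurfaces as a real subtlety in the paper's own write-up: the function it defines on $X$ is not literally a metric (the triangle inequality fails through the collapsed point), and with an honest quotient metric such as $\hat d(\pi x,\pi y)=\min\{d(x,y),\,d(x,\partial D)+d(y,\partial D)\}$ that sliding case needs an extra word. But the collapse-to-$\mathbb{S}^2$ argument is the engine of the proof, and it is absent from your proposal.
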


\begin{proof}  Consider the quotient space $X= \overline{D}/\partial D$.  Then, $X$ is homeomorphic to $\mathbb{S}^2$.  Moreover, we can define a metric on $X$ by letting 
$d([x], [y]) = d_{\R^2}(x, y)$ if $x, y \in D$, $d([x],[y]) = 0$ if $x, y \in \partial D$, $d([x],[y]) = d(x, \partial D)$ if $x\in D$, $y\in \partial D$.  Furthermore, 
$f:\overline{D} \to \overline{D} $ factors over $X$, and therefore cannot be expansive (by the classic Theorem in \cite{lew}). As $f:\overline D\to \overline D$ is $\alpha-$expansive, the result follows.
\end{proof}

\begin{teo}  Let $f: \R ^2\to \R^2$ be $\ta$. Suppose there exists an open simply connected  proper subset $U$ with compact closure such that 
$U \subset \inte(\overline {f(U)})$.  Then,  $ \cup_{n\geq 0} f^n (U)$ must be unbounded.
 
\end{teo}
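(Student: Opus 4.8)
The plan is to argue by contradiction: suppose $W:=\cup_{n\geq 0}f^n(U)$ is bounded, so that $\overline{W}$ is compact. The first step is to replace the (possibly non-invariant) set $W$ by a genuinely $f$-invariant open disc. Set $V_n:=\inte\,\overline{f^n(U)}=f^n(\inte\,\overline U)$. From the hypothesis $U\subseteq\inte\,\overline{f(U)}$ one gets $\overline U\subseteq\overline{f(U)}$, hence $V_0=\inte\,\overline U\subseteq\inte\,\overline{f(U)}=V_1$; applying the homeomorphism $f^n$ (which commutes with $\inte$ and with closure) shows that the $V_n$ form an increasing sequence. Consequently $W':=\cup_{n\geq 0}V_n$ satisfies $f(W')=\cup_{n\geq 1}V_n=W'$, so $W'$ is open, connected, bounded and fully invariant, and $\overline{W'}\subseteq\overline W$ is compact with $f(\overline{W'})=\overline{W'}$. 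Since each $V_n$ is simply connected, so is the increasing union $W'$; thus $W'$ is an open topological disc on which $f$ acts as a homeomorphism.

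The second step is to apply the preceding (disc) lemma to $D=W'$. Because $f$ is $\ta$, its expansivity function $\epsilon$ is bounded below on the compact set $\overline{W'}$ by some constant, and $f|_{\overline{W'}}$ is an $\alpha$-expansive self-homeomorphism of a closed topological disc. The lemma then produces a point $x\in W'$ whose \emph{entire} orbit stays $\alpha$-close to the boundary, i.e. $d(f^n(x),\partial W')\leq\alpha$ for all $n\in\Z$, where $\alpha$ may be taken as small as we like.

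The final step is to contradict this using the inward, repelling-core nature of the backward dynamics. Since $x\in W'=\cup_n V_n$, there is $N$ with $f^{-N}(x)\in\inte\,\overline U$; and because $U\subseteq\inte\,\overline{f(U)}$ forces $f^{-1}(\inte\,\overline U)\subseteq\inte\,\overline U$, the whole backward tail $\{f^{-n}(x):n\geq N\}$ remains in $\inte\,\overline U$. Hence the (nonempty, because bounded) set $\alpha(x)$ is contained in the compact invariant core $K:=\cap_{n\geq 0}f^{-n}(\overline U)$, which lies in the interior of $W'$ and is therefore separated from $\partial W'$ by a positive distance $d_0$. As the bounded backward orbit must accumulate precisely on $\alpha(x)\subseteq K$, we have $d(f^{-n}(x),K)\to 0$, so $d(f^{-n}(x),\partial W')\geq d_0/2$ for $n$ large. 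Choosing at the outset $\alpha<\min\{d_0/2,\ \inf_{\overline{W'}}\epsilon\}$ makes this incompatible with $d(f^{-n}(x),\partial W')\leq\alpha$, the desired contradiction; therefore $\cup_{n\geq 0}f^n(U)$ cannot be bounded.

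I expect the main obstacle to be the plane-topology bookkeeping rather than the dynamics. Two points need care: first, that $\overline{W'}$ is a bona fide closed topological disc, so that the quotient $\overline{W'}/\partial W'$ used in the preceding lemma is genuinely a sphere; and second, that the invariant core $K$ is truly separated from $\partial W'$, which amounts to the strict engulfing of $\overline U$ by its $f$-images and is sensitive to the regularity of $\partial U$. I would address both by passing to the sphere $\mathbb{S}^2=\R^2\cup\{\infty\}$, using that $f$ extends fixing $\infty$ and hence preserves the unbounded complementary component of $\overline{W'}$, and by filling in any bounded complementary components to render the region cellular before collapsing its boundary. Once these topological reductions are in place, the dynamical contradiction above goes through essentially unchanged.
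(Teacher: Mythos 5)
Your proposal follows the paper's proof essentially step for step: assume the forward union is bounded, apply the preceding disc lemma to get an orbit that stays $\alpha$-close to the boundary of the filled region for all time, and contradict this by pushing backward orbits into a compact invariant core away from that boundary. One genuine improvement over the paper: you replace $W=\cup_{n\geq 0}f^n(U)$ by the invariant region $W'=\cup_{n\geq 0}\inte\,\overline{f^n(U)}$ with $f(W')=W'$, whereas the paper applies the lemma directly to $D=\cup_{n\geq 0}f^n(U)$ and tacitly assumes that $f:\overline{D}\to\overline{D}$ is a surjective homeomorphism (which is needed both to call it an $\alpha$-expansive homeomorphism of $\overline D$ and to factor it to the quotient sphere). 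Your bookkeeping for this step ($V_n$ increasing, $W'$ open, bounded, simply connected, invariant) is correct.

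The gap is precisely the step you flag and then defer: the claim that $K=\cap_{n\geq 0}f^{-n}(\overline U)$ lies in the \emph{interior} of $W'$, hence at positive distance from $\partial W'$. This does not follow from the hypotheses you invoke: it would require the strict engulfing $\overline U\subset \inte\,\overline{f(U)}$, which is strictly stronger than the assumed $U\subset \inte\,\overline{f(U)}$. Concretely, take $f(z)=2z-1$ (conjugate to a homothety, hence $\ta$) and $U=B(0,1)$: then $U\subset \inte\,\overline{f(U)}=B(-1,2)$, but the fixed point $1$ lies on $\partial f^n(U)$ for every $n\geq 0$, so $K=\{1\}$, $W'=\{z:\mathrm{Re}\,z<1\}$, and $K\subset\partial W'$; moreover every backward orbit converges to $\partial W'$, which is the exact opposite of the lower bound $d(f^{-n}(x),\partial W')\geq d_0/2$ your final step needs. (Here $W'$ is unbounded, so the theorem is not contradicted; the point is that your separation claim is not a formal consequence of the hypotheses, so under the absurd assumption it would have to be \emph{derived} from boundedness plus expansivity, and that argument is missing.) Your proposed remedy --- passing to $\mathbb{S}^2$ and filling bounded complementary components --- fixes the cellularity of $\overline{W'}$ for the quotient construction, but does nothing for this separation issue. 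To be fair, the paper's own proof has the identical soft spot: it chooses $\alpha'<d(\partial D,K)$ and asserts that $f^{-n}(x)$ eventually leaves the $\alpha'$-neighborhood of $\partial D$, without justifying that $d(\partial D,K)>0$ or that backward orbits accumulate only on $K$ (with the paper's definition $K=\cap_{n\leq 0}f^n(U)$, $K$ is even empty in the example above, since $1\notin f^{-n}(U)$ for any $n$). So you have reproduced the paper's argument together with its one unproved step; the difference is that you noticed it, but the fix you sketch does not close it.
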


\begin{proof}  Let $D = \cup_{n\geq 0} f^n (U)$ and suppose that it is bounded.  Then, $D$ is an open topological disc with compact closure, and $f:\overline{D} \to \overline{D} $ an
$\alpha$-expansive homeomorphism, with $\alpha = \min \{\epsilon (x): x \in \overline{D} \}$, where $\epsilon: \R^2 \to \R$ is given by topological expansivity.  Of course, it is also $\alpha'$-expansive for any $\alpha'<\alpha$. Take  $\alpha' < d(\partial D, K)$, where $K= \cap_{n\leq 0} f^n (U)$.

It follows from the previous lemma 
that there exists $x\in D$ such that $d(f^n (x), \partial D)\leq \alpha'$ for all $n\in \Z$.  By the choice of $\alpha', x\notin K$.  This is a contradiction, because if $n$ is large enough,
then $f^{-n}(x)$ lies outside the 
$\alpha'$-neighborhood of $\partial D$.
\end{proof}

%
%
%
%

\end{document}